\tikzset{snake it/.style={decorate, decoration=snake}}
\tikzset{
	>=stealth',
	punkt/.style={
		rectangle,
		rounded corners,
		draw=black, very thick,
		text width=6.5em,
		minimum height=2em,
		text centered},
	pil/.style={
		->,
		thick,
		shorten <=2pt,
		shorten >=2pt,}
}
\newcommand{\N}{\ensuremath{\mathbb{N}}}
\newcommand{\Z}{\ensuremath{\mathbb{Z}}}
\newcommand{\cay}{\ensuremath{\mathsf{Cay}}}
\newcommand{\dih}{\ensuremath{\mathsf{Dih}}}
\newcommand{\sm}{\smallsetminus}
\newcommand{\Dinf}{\ensuremath{D_\infty}}
    \newcommand{\G}{\Gamma}
\newcommand{\free}{\ast\!\! }
\newcommand{\W}{\overline{W}}
\newcommand{\La}{\overline{\Lambda}}
\newcommand{\T}{\overline{\mathcal{T}}}
\newtheorem{theorem}{Theorem}[section]
\newtheorem{lemma}[theorem]{Lemma}
\newtheorem{thm}[theorem]{Theorem}
\newtheorem{coro}[theorem]{Corollary}
\newtheorem{remark}[theorem]{Remark}
\newtheorem{definition}[theorem]{Definition}
\newtheorem{prob}{Problem}
\newtheorem{conj}{Conjecture}
\newtheorem*{conj*}{Conjecture}
\newtheorem*{exa}{Example}
\theoremstyle{remark}
\newtheorem{claim}{Claim}
\newcounter{case}
\renewcommand{\thecase}{\Roman{case}}
\newenvironment{txteq*}
{
	\begin{equation*}
		\begin{minipage}[t]{0.85\textwidth} 
			\em                                
		}
		{\end{minipage}\end{equation*}\ignorespacesafterend}
\title{Hamiltonicity in generalized quasi-dihedral groups
}
\author{Babak Miraftab\thanks{Corresponding author} \\ \href{mailto:bobby.miraftab@uleth.ca}{babakmiraftab@cunet.carleton.ca} \and Konstantinos Stavropoulos\\ \href{mailto:konstantinos.stavrop@gmail.com}{konstantinos.stavrop@gmail.com}}
	\date{\today}
\begin{document}
 \maketitle

\begin{abstract}
Witte Morris showed in [Discrete Math, 38.1 (1982)] that every connected Cayley graph of a finite (generalized) dihedral group has a Hamiltonian path. The infinite dihedral group is defined as the free product  $\Z_2 \ast \Z_2$. 
We show that every connected Cayley graph of the free product with amalgamation $A \ast_K A$ has  a Hamiltonian double ray, where $A$ is 
a generalized quasi-dihedral group on $K$.
Additionally, this leads to the conclusion that each connected Cayley graph of the infinite dihedral group also contains a Hamiltonian double ray.
\end{abstract}

\noindent { Keywords:}
Cayley Graphs, Infinite graphs, Hamiltonian double rays.\\
2010 MSC:[2010](05C63, 05C25, 05C45 20E06 20F05)
\section{Introduction}
A \textit{Hamiltonian cycle(path)} in a finite graph is a cycle(path) which includes every vertex of the graph.
A graph $G$ is  \textit{vertex-transitive} if for any two vertices $v_1$ and $v_2$ of $G$, there is some automorphism $f\colon G\to G$
such that $f(v_{1})=v_{2}$. 
The Lov\'{a}sz conjecture for vertex-transitive graphs states that every finite connected vertex-transitive graph contains a Hamiltonian cycle except five known counterexamples. 
Nevertheless, even the weaker version of the conjecture for finite Cayley graphs remains unresolved. For a comprehensive survey on the field, see~\cite{Lanel,wittesurvey}.

A \textit{dihedral group} is the group of symmetries of a regular polygon, which includes rotations and reflections.
Dihedral groups contain a cyclic subgroup with  an index of $2$.
A group is called \emph{generalized quasi- dihedral} if there is an abelian subgroup $K$ of index $2$ such that there is an element $b \in G \sm  K$ such that $bkb^{-1}k=1$ for every $ k \in K$ and $b^2\in K$.
Witte showed in \cite{wittedigraphs} that every connected Cayley graph of a finite (generalized) dihedral group has a Hamiltonian path. 
It is worth mentioning that the existence of a Hamiltonian cycle in a dihedral group is still not known.

While all preceding results concerned finite graphs, Hamiltonian cycles(paths) have also been considered in infinite graphs. While a Hamiltonian ray may be seen as a generalization of a Hamiltonian path for infinite graphs, determining the correct infinite analogue of a Hamiltonian cycle remains controversial.
One such candidate is a \emph{double-ray}, an infinite 2-regular connected graph. A double-ray is called hamiltonian for a graph $G$, if it is spanning inside $G$.

It is an easy observation to see that the Lov\'{a}sz conjecture already fails for infinite Cayley graphs with Hamiltonian double-rays in place of Hamiltonian cycles, since the amalgamation of more than $k+1$ groups on a subgroup of order $k$ will produce groups with Cayley graphs that have separators of size $k$ whose removal leaves more than $k+1$ components, a well-known obstruction to Hamiltonicity (see~\cite{AgelosFleisch}).

However, the above counterexamples have infinitely many ends.
On the other hand, it was proven in~\cite{MR4908254,HC1,nash1959abelian} that any Cayley graph of an abelian two-ended group contains a Hamiltonian double ray. In particular, the following has been conjectured.

\begin{conj}[\cite{miraftab2017cycles}]\label{conjecture}
Any Cayley graph of a group with at most two ends has a Hamiltonian double ray.
\end{conj}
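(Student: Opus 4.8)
The plan is to prove the conjecture by a case analysis on the number of ends. A double ray is infinite, so a finite group (zero ends) cannot admit one; I therefore read the statement as concerning infinite groups and separate the two genuinely different regimes: groups with exactly two ends, whose coarse geometry is a line and for which strong structural tools exist, and groups with exactly one end, where no such structure is available and which I expect to be the real difficulty. Groups with infinitely many ends are excluded precisely because of the separator obstruction recalled above, via \cite{AgelosFleisch}.

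For the \textbf{two-ended case}, I would lean on the structure theorem: every two-ended group $G$ has a finite normal subgroup $F$ with $G/F \cong \Z$ or $G/F \cong \Dinf$; equivalently $G$ is virtually $\Z$ and splits either as an HNN extension over a finite subgroup or as an amalgam $A \ast_C B$ with $C$ finite of index two in each factor. Passing if necessary to an index-at-most-two subgroup $G_0$ mapping onto $\Z$, fix $t$ with $G_0 = \bigsqcup_{n\in\Z} F t^n$, so the cosets form a bi-infinite line of finite blocks, consecutive blocks spanning a finite subgraph; in the $\Dinf$ case an extra reflection relates the two ends symmetrically (this is exactly the situation of the infinite dihedral group treated in the present paper). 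The plan is then a block-by-block construction: build a Hamiltonian path of each finite block whose two endpoints lie in prescribed cosets on opposite sides, and concatenate these paths across all $n$ into a single $2$-regular connected spanning double ray. The engine for the finite pieces is Witte Morris's theorem \cite{wittedigraphs} on Hamiltonian paths in finite (generalized) dihedral groups, applied to the finite quotient structure, with the abelian sub-case already settled in \cite{HC1,nash1959abelian}. The technical labour is the \emph{matching problem}: choosing the endpoints and connecting edges of consecutive blocks compatibly so that the concatenation covers every vertex and remains a single path. This is the regime in which the present paper's methods operate, so extending them to arbitrary two-ended $G$ is plausible.

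For the \textbf{one-ended case}, which I expect to be the main obstacle, note first that one-endedness is favourable: by Stallings' theorem such $G$ admits no nontrivial splitting over a finite subgroup, so no finite separator leaves two infinite pieces, and the obstruction of \cite{AgelosFleisch} cannot arise. Lacking a linear structure, I would attempt a limiting argument. If $G$ is residually finite, fix a cofinal chain of finite-index normal subgroups $N_1 \supseteq N_2 \supseteq \cdots$ and suppose each finite Cayley graph $\cay(G/N_i, S)$ carries a Hamiltonian cycle; pulling these back and running a König's-lemma compactness argument over a ball exhaustion of $\cay(G,S)$ produces a spanning subgraph in which every vertex has degree two and no finite cycle survives, hence a disjoint union of double rays. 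The residual difficulties are twofold and severe: (i) the hypothesis that the finite quotients are Hamiltonian is itself an instance of the open finite Lovász/Cayley problem; and (ii) one must force the limit to be a \emph{single connected} double ray rather than a disjoint family of them. For (ii) I would try to exploit one-endedness — intuitively the unique end should prevent fragmentation — together with a rerouting step that splices any two parallel double rays into one; alternatively, to avoid the residual-finiteness assumption entirely, one could replace compactness by a direct exhaustion-and-absorption construction that builds the double ray outward along finite balls while reserving absorbing gadgets to capture stranded vertices.

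In summary, I expect the two-ended case to reduce, via the structure theorem and the cited finite Hamiltonian-path results, to the finite matching-and-gluing problem that this paper solves in special cases, whereas the one-ended case is the genuine bottleneck: it appears to require both progress on finite Cayley Hamiltonicity and a new mechanism for upgrading a $2$-regular spanning limit into a connected Hamiltonian double ray, with the global bookkeeping that guarantees no vertex is left behind and that both tails escape to infinity being the hardest point.
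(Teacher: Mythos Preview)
The statement you are attempting to prove is labelled \emph{Conjecture} in the paper for a reason: the paper does not prove it, and indeed it remains open. The paper's contribution is Theorem~\ref{main}, which establishes the conjecture only for the special class of two-ended generalized quasi-dihedral groups; the general two-ended case and the entire one-ended case are left untouched. So there is no ``paper's own proof'' to compare your proposal against.

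What you have written is a reasonable outline of a research programme, not a proof, and you yourself identify the gaps. In the two-ended case your reduction to finite blocks and the ``matching problem'' is exactly the difficulty the paper confronts even in the restricted generalized quasi-dihedral setting, where it already requires the machinery of Sections~\ref{short cycles} and~\ref{sec:walls} (short-cycle lemmas, spanning grids, twisted cubic cylinders). For a general two-ended group $G=A\ast_K B$ the finite pieces need not be dihedral or quasi-dihedral, so Witte Morris's theorem~\cite{wittedigraphs} is not available, and no substitute is known; your sketch does not address this. In the one-ended case you explicitly rely on Hamiltonicity of all finite Cayley quotients, which is the open Lov\'asz conjecture for Cayley graphs, and you also note that the compactness limit need not be connected. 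These are not technicalities but the heart of the problem, so the proposal does not constitute a proof.
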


In this paper, we address the result of Witte \cite{wittedigraphs} for infinite dihedral groups as well as a natural generalization of them: two-ended generalized quasi-dihedral groups --- the definition of which we defer to the next Section --- and thus, we make progress towards~\Cref{conjecture}.
Our main result is the following:

\begin{thm}\label{main}
Let $G=A\ast_K A$, where  $A$ is a  generalized quasi-dihedral on $K$.
Then every connected Cayley graph of $G$ contains a Hamiltonian double ray. 
\end{thm}

We note that $[A:K]=2$ which implies that $G$ is two-ended.
\noindent
The following Corollary is an immediate consequence of~\Cref{main}.

\begin{coro}
Every connected Cayley graph of the infinite dihedral group $\Z_2\ast \Z_2$ has a Hamiltonian double ray.
\end{coro}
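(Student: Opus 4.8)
The plan is to read the Corollary off Theorem~\ref{main}: one only has to check that the infinite dihedral group lies in the class treated there, and to recall that ``every connected Cayley graph'' of a group $G$ is exactly the family $\cay(G,S)$ as $S$ ranges over the generating sets of $G$.

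First I would recall the structure of $\Dinf = \Z_2 \free \Z_2$. Writing $\Dinf = \langle a,b \mid a^2=b^2=1\rangle$ and setting $t \defi ab$, one has $\langle t\rangle\cong\Z$ and $ata^{-1}=t^{-1}$, so $\Dinf = \langle t\rangle\rtimes\langle a\rangle\cong\dih(\Z)$ is the generalized dihedral group over $\Z$; in particular it is a generalized quasi-dihedral group. Moreover it contains $\langle t\rangle\cong\Z$ as a subgroup of index $2$, so it is two-ended. Hence $\Dinf$ satisfies the hypotheses of Theorem~\ref{main}.

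Now let $\Gamma$ be an arbitrary connected Cayley graph of $\Dinf$. By definition $\Gamma=\cay(\Dinf,S)$ for some (symmetric, identity-free) set $S\subseteq\Dinf$, and connectedness of $\Gamma$ is equivalent to $\langle S\rangle=\Dinf$. Applying Theorem~\ref{main} with $G=\langle S\rangle=\Dinf$ gives a Hamiltonian double ray in $\Gamma$, which is precisely the claim.

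There is no real obstacle here; all the content sits in Theorem~\ref{main}. If a self-contained proof for this instance were wanted, the natural route is to specialise the argument of Theorem~\ref{main}: case-split according to $S\cap\langle t\rangle$ and the set of reflections $t^na\in S$, and assemble the double ray from short repeating blocks along the copy of $\Z$ given by $\langle t\rangle$. With Theorem~\ref{main} available, this is unnecessary.
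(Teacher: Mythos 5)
Your proposal is correct and matches the paper's treatment: the paper states the Corollary as an immediate consequence of Theorem~\ref{main}, exactly because $\Z_2\ast\Z_2\cong\dih(\Z)$ is a two-ended generalized quasi-dihedral group and any connected Cayley graph arises from a generating set. Your verification of these two hypotheses (index-$2$ infinite cyclic subgroup giving two-endedness, and the semidirect product structure giving the generalized (quasi-)dihedral property) is precisely the routine check the paper leaves implicit.
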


Our proof relies on extracting infinite two-ended grids or infinite two-ended cubic ``cylindrical walls'' (the precise definition of which we defer to~\Cref{sec:walls}) as spanning subgraphs of the Cayley graph. We prove that they, in turn, contain Hamiltonian double rays.

\section{Preliminaries}
In this paper, all groups are assumed to be finitely generated.
We begin with the definition of generalized dihedral groups and  generalized dicyclic group.
The \emph{generalized dihedral group on an abelian subgroup $K=\langle S\mid R\rangle$} is the group $\dih(K):=\langle S,b\mid R,b^2=bkb^{-1}k=1,\forall k\in K\rangle$. 
Alternatively, it is the external semidirect product $\dih(K) = K \rtimes_{\phi} \Z_2$, where $K$ is abelian and $\phi(1)(g) =-g$ for any $g \in K$, in additive notation.
When $K=\Z$, we obtain the infinite dihedral group $D_{\infty}$. 
Note that an alternative presentation of $D_{\infty}$ is $\langle a,b\mid a^2=b^2=1\rangle=\Z_2\ast \Z_2$, where $\ast$ denotes the free product of groups.
Let $K$ be an abelian group of even order and of exponent greater than $2$, and let $y$ be an involution of $K$.
The \emph{generalized dicyclic group} is the group $\langle K, b \mid b^2 = y, b^{-1}kb = k^{-1}, \forall k \in  K\rangle $, see \cite{MR3266284}.
Next, we extend these definitions to a more general setting.
\begin{definition}\label{semidihedral}
A group $G$ is \emph{generalized quasi-dihedral on a subgroup $K$  and $b\in G\sm K$} if the following holds:
\begin{itemize}
    \item $K$ is an abelian subgroup and $[G:K]=2$, and 
    \item There is an element $b \in G \sm  K$ such that $bkb^{-1}k=1$ for $\forall k \in K$.
\end{itemize}  
For the sake of simplicity, we use the abbreviation \emph{GQD} on $K$ and $b$ to refer to a generalized quasi-dihedral group on $K$ and $b$.
\end{definition}

\begin{exa}
A group G is called a generalized dicyclic group, written as Dic(A, y), if it is generated by A and an additional element x, and in addition we have that $[G:A] = 2$, $x^2 = y$, and for all $a$ in $A$, $x^{-1}ax = a^{-1}$. 
For any abelian group $H$, the generalized dihedral group of $H$, written Dih($H$), is the semidirect product of $H$ and $\Z_2$, with $\Z_2$ acting on H by inverting elements. 
$$Dih(H)=C_{2}\ltimes H=\langle s,H\mid shs^{-1}=h^{-1}\text{\ for\ all\ }h\in H,s^{2}=1\rangle$$
\end{exa}
\begin{lemma}
    If $G$ is a generalized quasi-dihedral group on a subgroup $K$ and a specific element $b\in G\sm K$, then $G$ is also generalized quasi-dihedral on $K$ and $x$ for every $x\in G\sm K$.
\end{lemma}

\begin{proof}
Indeed, let $x=kb\in Kb$. 
Then $xk'x^{-1}=kbk'b^{-1}k^{-1}=kk'^{-1}k^{-1}=k'^{-1}$.
\end{proof}

\begin{remark}
    Thus we will say that $G$ is \emph{generalized quasi-dihedral} just on $K$ when we do not want to fix $b$ outside of $K$.
\end{remark}


\begin{definition}
Let $G_1=\langle S_1\mid R_1\rangle$ and $G_2=\langle S_2\mid R_2\rangle$ be two groups.
Suppose that a subgroup $H_1$ of $G_1$ is isomorphic to a subgroup $H_2$ of $G_2$, say an isomorphic map $\phi\colon H_1\to H_2$.
The  \emph{free product with amalgamation} of $G_1$ and $G_2$ over $H_1\cong H_2$ is
 $$G_1 \underset{H_1}{\ast} G_2=\langle S_1\cup S_2\mid R_1, R_2, h\phi(h)^{-1}=1,\forall h\in H_1\rangle. $$
\end{definition}
\noindent For more details and applications of free products with amalgamation, see \cite{miraftab2019splitting}.

\noindent A graph $\Gamma$ is called \emph{locally finite} if every vertex has finite degree. 
\begin{definition}[cf.\ {\cite[pp.~1424]{RDsBanffSurveyI}}]
We call $1$-way infinite paths \emph{rays}, and $2$-way infinite paths \emph{double rays}. 
An \emph{end} of $\Gamma$ is an equivalence class of rays in $\Gamma$,
where two rays are considered \emph{equivalent} if no finite set of vertices separates them in $\Gamma$.
\end{definition}

Let $G$ be a finitely generated group. 
Let $S \subseteq G$ be a finite generating set of $G$ and let $\cay(G;S)$ be the Cayley graph of $G$ with respect to $S$. 
The number of ends of $G$ is defined as the number of ends of $\cay(G;S)$. 
A basic fact in the theory of ends for groups states that the number of ends of $G$ does not depend on the choice of a finite generating set $S$ of $G$, so that it is well-defined, see Corollary 2.3 of \cite{miraftab2018two}.

\begin{thm}{\rm\cite[Theorem 5.12]{ScottWall}}\label{classifiication}
Let $G$ be a finitely generated group. Then the following statements are equivalent:
\begin{enumerate}[\rm (i)]
    \item $G$ is a two-ended group.
    \item $G$ is isomorphic to either the free product with amalgamation $A\ast_C B$, where $C$ is finite and
$[A : C] = [B : C] = 2$ or the HNN-extension $\ast_{\phi}C$, where $C$ is finite and $\phi \in Aut(C)$.
\end{enumerate}
\end{thm}	

Lastly, we reformulate~\Cref{classifiication} in the case that a two-ended group is a free product with amalgamation.

\begin{coro}\label{corclass}
Let $G$ be a finitely generated group. The following are equivalent:
\begin{enumerate}[\rm (i)]
    \item $G$ is two-ended and $G=A\ast_C B$, where $A$ and $B$ are finite.
    \item $G=A\ast_C B$, where $C$ is finite and $[A:C]=[B:C]=2$.
    \item $G/C\cong D_{\infty}$, where $C$ is finite and normal in $G$.
\end{enumerate}
\end{coro}

\begin{proof}
(i) $\Leftrightarrow$ (ii): The inverse implication follows directly from  Lemma~\ref{classifiication}. For the forward implication, let $\Gamma$ be the Cayley graph of $G$ with respect to the generating set $A\cup B$. Assume that $[A:C]\geq 3$ or $[B:C]\geq 3$. Recall that $C \leq A, B$, hence also finite. Then $\Gamma\sm A$ has at least three infinite components, each of which must contain an end by the fact that $\Gamma$ is a Cayley graph, a contradiction to $G$ being two-ended.\\
(ii)  $\Leftrightarrow$ (iii): We easily deduce by the presentation of a free product with amalgamation that if $C$ is normal in $G$, we have that $G=A\ast_C B$ if and only if $G/C\cong (A/C) \ast (B/C)$. We deduce (ii) from (iii) by recalling that $D_{\infty}\cong \Z_2\ast \Z_2$. Finally, (ii) implies (iii) by noting that having index $2$, $C$ is normal in both $A$ and $B$. Since $A$ and $B$ generate $G$ (by definition), we infer that $C$ is normal in $G$.
\end{proof}

\section{Properties of GQD groups}
Again we assume that all groups here are finitely generated. 
The following lemma follows directly by the definition of a generalized quasi-dihedral group.

\begin{lemma}\label{b4=1}
Let $G$ be a generalized quasi-dihedral on $K$ and $b$. 
Then the following statements hold:
\begin{enumerate}[\rm(i)]
    \item If $x\in G\sm K$, then $x^4=1$.
    \item Every subgroup of $K$ is normal in $G$.
\end{enumerate}
\end{lemma}

\begin{proof}
\begin{enumerate}[\rm(i)]
    \item Since $[G:K]=2$, one can see that $x^2\in K$.
    By~\Cref{semidihedral}, we have $xx^2x^{-1}=x^{-2}$, or equivalently $x^4=1$.
    \item This part is a direct consequence of the definition.\qedhere
\end{enumerate}
\end{proof}

In \cite{wittedigraphs}, Witte Morris proved the following:

\begin{thm}{\rm\cite[Theorem 5.1]{wittedigraphs}}\label{finitehamiltonpath}
If a finite group $G$ has a subgroup $N$ of index 2 such that every subgroup of $N$ is normal in $G$, then every Cayley graph of $G$ has a Hamiltonian path.
\end{thm}

As a combination of~\Cref{b4=1} and~\Cref{finitehamiltonpath} we obtain the following corollary.

\begin{coro}\label{semiham}
Let $G$ be a finite generalized quasi-dihedral group.
Then every Cayley graph of $G$ has a Hamiltonian path.\qed
\end{coro}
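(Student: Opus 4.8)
The plan is simply to recognise this as a direct instance of Witte's criterion, Lemma~\ref{finitehamiltonpath}. By Definition~\ref{semidihedral}, a generalized quasi-dihedral group $G$ comes equipped with an abelian subgroup $K$ of index $2$; since $G$ is assumed finite, so is $K$. I would take $N \defi K$ as the required index-$2$ subgroup.

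It then remains only to check that every subgroup of $K$ is normal in $G$, and this is precisely the content of Lemma~\ref{b4=1}(iii). (If one prefers to see it directly: a subgroup $H \le K$ is closed under inversion; conjugation by an element of $K$ fixes $H$ setwise because $K$ is abelian; and conjugation by any $x \in G \setminus K$ sends each $k \in H$ to $xkx^{-1} = k^{-1} \in H$ by Lemma~\ref{b4=1}(i). Since $G = K \cup (G \setminus K)$, the subgroup $H$ is normal in $G$.)

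With these facts in hand, Lemma~\ref{finitehamiltonpath} applies with this choice of $N$ and immediately yields a Hamiltonian path in every Cayley graph of $G$. I do not expect any genuine obstacle here: the whole weight of the argument is carried by Lemma~\ref{b4=1} and by Witte's theorem (Lemma~\ref{finitehamiltonpath}), both of which are available to us, and the corollary is purely a matter of assembling them. The only point deserving a moment's care is to note that finiteness of $G$ is what lets us invoke Lemma~\ref{finitehamiltonpath} in the first place, and that finiteness of $K$ follows automatically from it having index $2$.
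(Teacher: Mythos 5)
Your proposal is correct and matches the paper's argument exactly: the corollary is stated there as an immediate combination of Lemma~\ref{b4=1}(iii) (every subgroup of $K$ is normal in $G$) with Witte's criterion, Lemma~\ref{finitehamiltonpath}, applied to $N=K$. Your explicit verification of the normality condition is a fine (if optional) elaboration of what the paper leaves implicit.
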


Our ultimate goal is to extend the above result to infinite generalized quasi-dihedral groups.
We present some of their general properties, which we need towards the main result of the paper.

\begin{lemma}\label{b2}
    Assume that $A$ is a generalized quasi-dihedral on $K$ and $b$ and $B$ is a generalized quasi-dihedral on $K$ and $b'$.
    Then there is an isomorphism $\varphi\colon A\to B$, where $\varphi$ is the identity map on $K$ if and only if $b^2=b'^2$.
\end{lemma}

\begin{proof}
If $\varphi$ exists and $\varphi|_K=\mathrm{id}_K$, then
\[
b^2=\varphi(b^2)=\varphi(b)^2=(b')^2,
\]
so $b^2=(b')^2$.
Conversely, we assume that $b^2=b'^2$.
Define $\varphi\colon A\to B$ by sending $kb^i$ to $kb'^i$, where $i=0,1$.
We show that $\varphi$ is an isomorphism. It is straightforward to show that  $\varphi(xy)=\varphi(x)\varphi(y)$ for every $x,y\in A$.
We show that $\varphi(x^{-1})=\varphi(x)^{-1}$ for $x\in A$.
If $x\in K$, then we are done. 
Else,  $x=kb$ and
\begin{align*}
    \varphi((kb)^{-1})&=\varphi(b^{-1}k^{-1})\\
    &=\varphi(kb^{-1})\\
    &=\varphi(kb^3)=kb^2b'\\
    &=kb'^2b'=kb'^{-1}\\
    &=b'^{-1}k^{-1}\\
    &=\varphi(kb)^{-1}
\end{align*}
\end{proof}

\begin{lemma}\label{bb'}
Let $G=A\ast_K B$, where $A$ is GQD on $K$ and $b$ and $B$ is GQD on $K$ and $b'$.
If $k\in K$ and $n\in \Z$, then:
\begin{enumerate}[\rm(i)]
\item $(b'b)^n k = k(b'b)^n$ and $(b'b)^n = k'(bb')^{-n}$ for some $k'\in K$;
\item $(bb')^n k = k(bb')^n$ and $(bb')^n = k''(b'b)^{-n}$ for some $k''\in K$.
\end{enumerate}
\end{lemma}

\begin{proof}
First, since $b'kb'^{-1}=k^{-1}$ and $bkb^{-1}=k^{-1}$ for all $k\in K$, we have
\[
b'b\,k = b'(bk) = b'(k^{-1}b)= (b'k^{-1})b = k\,b'b,
\]
and similarly $bb'\,k = k\,bb'$. Hence $(b'b)^n$ and $(bb')^n$ commute with every $k\in K$.

Next, note that $b^{-1}=b^3=b^2b$ and $b'^{-1}=b'^3=b'^2b'$ (since $b^4=b'^4=1$ and $b^2,b'^2\in K$).
Thus
\[
(bb')^{-1}=b'^{-1}b^{-1}=(b'^2b')(b^2b)=(b'^2b^2)\,(b'b),
\]
so with $h\coloneqq b'^2b^2\in K$ we get $(bb')^{-1}=h\,(b'b)$ and hence
\[
bb'=(b'b)^{-1}h^{-1}.
\]
Because $h\in K$ commutes with $b'b$, we obtain for all $n\in\Z$:
\[
(bb')^n = (b'b)^{-n}\,h^{-n}.
\]
This gives (ii) with $k''=h^{-n}\in K$. The proof of (i) is analogous (swap $b$ and $b'$).
\end{proof}

\begin{thm} {\rm \cite[Theorem 11.3]{bogo}}\label{normalform}
Let $G_1$ and $G_2$  be two groups with isomorphic subgroups~$H_1$ and $H_2$ respectively. 
Let~$T_{i}$ be a left transversal\footnote{A \emph{transversal} is a system of representatives of left cosets of~$H_i$ in~$G_i$ and we always assume that~$1$ belongs to it.} of~$H_i$ for~${i=1,2}$.
Any element $x\in G_1\free_{H} G_2$ can be uniquely written in the form  $x=x_0x_1\cdots x_n$, where $H\cong H_1\cong H_2$:
\begin{itemize}
\item[{\rm(i)}] $x_0\in H_1$.
\item[{\rm(ii)}]$x_j\in T_1\sm 1$ or $x_j\in T_2 \sm 1$ for $j\geq 1$ and the consecutive terms $x_j$ and $x_{j+1}$ lie in distinct transversals.
\end{itemize}
\end{thm}

We denote the negative integers by $-\mathbb{N} $.

\begin{lemma}\label{randomelement}
Let $G=A\ast_K B$, where $A$ is generalized quasi-dihedral on $K$ and $b$ and $B$ is generalized quasi-dihedral on $K$ and $b'$.
If $g\in G$ is an arbitrary element of $G$, then $g$ has one of the following forms, where $k\in K$:
\begin{itemize}
\item[{\rm(i)}] $g=k(bb')^n$ for $n\in \Z$.
\item[{\rm(ii)}] $g=k(bb')^nb$ for $n\in \N\cup \{0\}$.
\item[{\rm(iii)}] $g=k(bb')^nb'$ for $n\in -\N\cup \{0\}$.
\end{itemize}
\end{lemma}

\begin{proof}
By \Cref{normalform}, every element $g\in G$ can be expressed in one of the following forms:
\[
k(bb')^n,k(bb')^nb,k(b'b)^n,k(b'b)^nb',
\]
for some $n\in \N$ and $k\in K$.
If $g$ is either $k(b'b)^n$ or $g=k(b'b)^nb'$, then by \Cref{bb'} $g$ can be written as either  $k'(bb')^{-n}$ or $k'(bb')^{-n}b'$.
\end{proof}

\begin{lemma}{\rm\cite[Lemma 5.6]{ScottWall}}\label{finiteindex}
Let $H$ be a subgroup of finite index in $G$.
Then the number of ends of $H$ is the same as the number of ends of $G$.
\end{lemma}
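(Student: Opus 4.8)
Since the paper quotes this lemma from \cite{ScottWall}, I will only outline the standard argument. The plan is to reduce it to two facts recalled (or standard) above: that the number of ends of a finitely generated group equals the number of ends of any of its Cayley graphs on a finite generating set, and that if a finitely generated group $\Gamma$ acts on a connected, locally finite graph $X$ with finitely many vertex-orbits and finite vertex-stabilizers, then $X$ has the same number of ends as $\Gamma$.

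First I would observe that $H$ is finitely generated, so that ``the number of ends of $H$'' is meaningful: a finite-index subgroup of a finitely generated group is finitely generated (Reidemeister--Schreier). Then, fixing a finite generating set $S$ of $G$ and setting $X:=\cay(G,S)$ --- which is connected and locally finite --- the number of ends of $G$ equals the number of ends of $X$ by definition. The subgroup $H$ acts on $X$ by left multiplication; since $H\le G$, this action is free, and it has exactly $[G:H]<\infty$ vertex-orbits, one per right coset $Hg$. Applying the cited principle to this action yields that the number of ends of $H$ equals the number of ends of $X$ as well, and comparing the two equalities gives the statement.

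The only substantive ingredient is that principle, and the main (and purely expository) obstacle is to prove it. One does so by choosing a finite connected subgraph $Y\subseteq X$ that meets every vertex-orbit and fixing a base vertex $y_0\in Y$: the $\Gamma$-translates of $Y$ then cover $X$ with uniformly bounded overlap, and the orbit map $\Gamma\to V(X)$, $\gamma\mapsto\gamma y_0$, extends --- by routing each edge of a Cayley graph of $\Gamma$ along a path of bounded length in $X$ --- to a quasi-isometry from that Cayley graph to $X$. Since the number of ends of a connected, locally finite graph is a quasi-isometry invariant, this gives the desired equality; the full details may be found in \cite{ScottWall}.
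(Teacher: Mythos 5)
The paper offers no proof of this lemma at all---it is quoted directly from \cite{ScottWall}---so there is nothing to compare against; what matters is whether your sketch is sound, and it is. Your reduction is the standard one: $H$ is finitely generated because it has finite index in the finitely generated group $G$, $H$ acts freely by left multiplication on $X=\cay(G,S)$ with exactly $[G:H]<\infty$ vertex-orbits, and the \v{S}varc--Milnor-type principle then makes $H$ quasi-isometric to $X$, so its end space agrees with that of $X$, hence with that of $G$. The only ingredient you leave unproved is the quasi-isometry invariance of the number of ends; for this one should say that connected locally finite graphs (with unit edge lengths) are proper geodesic spaces, for which invariance of the end space under quasi-isometry is classical---your bounded-overlap covering argument is one way to see it, and since $X$ here has bounded degree there are no pathologies to worry about. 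Given that the lemma is itself a citation, deferring these details to \cite{ScottWall} is entirely reasonable.
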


\begin{theorem}\label{equivalence}
Let $G=A\ast_K A$, where  $A$ is a generalized quasi-dihedral on $K$.
Then $G$ is generalized quasi-dihedral.
Conversely, if $G$ is two-ended generalized quasi-dihedral, then $G=A\ast_K B$, where $A$ and $B$ are generalized quasi-dihedral on $K$.
\end{theorem}

\begin{proof}
For the forward implication, 
let $G=A\ast_K B$, where $A\cong B$ and so $b^2=b'^2$.
Let $a=bb'$. 
Recall that by~\Cref{corclass}, $K$ is a normal subgroup of $G$.
By \Cref{bb'}, we have $a^nk=(bb')^nk=k(bb')^n=ka^n$, so $K\langle a\rangle$ is an abelian subgroup of $G$.

\begin{claim}\label{[G:Ka]=2}
 $[G:K\langle a\rangle]=2$.
\end{claim}
\begin{proof}[Proof of the Claim]
It follows from Lemma 3.6 that $G = K\langle a\rangle \cup K\langle a\rangle b \cup K\langle a\rangle b'$. 
Hence to prove the claim it is enough to show that $b'$ belongs to $K\langle a\rangle b$ (and that $K\langle a\rangle \neq G)$. 
But this is simply the fact that $b'b^{-1}a = b'b^{-1}(bb') = b'^2$ is in $K$.
\end{proof}
Since $A$ is generalized quasi-dihedral on $K$ and $b$, we have $bkb^{-1}k=1$ for every $k\in K$.
In fact, we show that $bxb^{-1}x=1$ for every $x\in K\langle a\rangle$.
This will follow by the following claim:

\begin{claim}\label{bak=(ka)^{-1}b}
$b(kbb')=(kbb')^{-1}b$.
\end{claim}
\begin{proof}[Proof of the Claim]
As before, we have
\begin{align}
    bkbb' &=k^{-1}bbb'\nonumber\\
          &=k^{-1}b'b^{-2}\label{ww}\\
          &=k^{-1}b'^{-3}b^{-2}\label{www}\\
          &=k^{-1}b'^{-1}b'^{-2}b^{-2}\nonumber\\ 
          &=b'^{-1}k\label{wwww}\\ 
          &=b'^{-1}(b^{-1}b)k\nonumber\\ 
          &=b'^{-1}b^{-1}k^{-1}b\nonumber\\ 
          &=(kbb')^{-1}b\nonumber
\end{align} 

\Cref{ww} follows from the fact that $b^2\in K$ and so $b^2b'=b'b^{-2}$, whereas \Cref{www} follows from~\Cref{b4=1}(i).
Lastly, for \Cref{wwww} we simply make use of $b^2=b'^2$.
\end{proof}

\noindent We have shown that $G$ is a generalized quasi-dihedral group on $K\langle a\rangle$ and $b$.\\

Conversely, assume that $G$ is a two-ended group and moreover $G$ is a generalized quasi-dihedral group on $H$ and $b$ and so $[G:H]=2$ and $b^4=1$.
It follows from~\Cref{finiteindex} that $H$ is a two-ended abelian group.
So $H$ must be isomorphic to $K\langle a\rangle$, where $K$ is the finite maximal subgroup of $H$ and the order of $a$ is infinite.
It follows from \Cref{b4=1}(ii) that $K$ and $\langle a\rangle$ are normal subgroups of $G$.
We claim the following:
\begin{claim}
$G/K\cong D_{\infty}$.
\end{claim}
\begin{proof}[Proof of the Claim]
Let $\bar{G}\coloneqq G/K$ and $\bar{H}\coloneqq H/K$.
Hence $[\bar{G}: \bar{H}]=2$ and $\bar{H}\cong \Z$, which implies that $\bar{G}$ is either $\Z\times \Z_2$ or $\Z\rtimes \Z_2$.
We exclude the former case by showing that $\bar{G}$ is not abelian.

Indeed, assume that $xK$ and $yK$ commute for every $x,y \in G$.
Since $G$ is a generalized quasi-dihedral group on $H$, we have $xhx^{-1}h=1$ for every $x\in G\sm H$ and $h\in H$. 
In particular, $a\in H$ and so $xax^{-1}a=1$ for every $x\in G\sm H$.
However, $xax^{-1}a^{-1}K=K$, as $aK$ and $x^{-1}K$ commute.
We obtain 
\begin{align*}
xax^{-1}a^{-1}K=&1K\\  
xax^{-1}a^{-1}K=&xax^{-1}aK\\    
a^{-1}K=&aK, 
\end{align*}
which implies that $a^2\in K$, violating the fact that $K$ is finite and $a$ has an infinite order.
Thus $\bar{G}$ is not abelian and so $\bar{G}\cong \Z\rtimes \Z_2 \cong D_{\infty}$.
\end{proof}
It follows from~\Cref{corclass} that the group $G$ can be written as $A\ast_{K} B$, where $[A:K]=[B:K]=2$.
Lastly, we show that $A=cK\sqcup K$ is generalized quasi-dihedral on $K$, where $c\in A\sm K$.
Indeed, any $c=bh \in bH\cap A$ will do, as $bhb^{-1}h=1$ and $G=H\sqcup bH$. 
With an analogous method, one can show that $B$ is also generalized quasi-dihedral.
\end{proof}

We gather some useful facts in the following lemma, where we make use of the notation of \Cref{equivalence}.

\begin{lemma}\label{ba}
Let $G=A\ast_K B$, where $A,B$ are GQD on the finite group $K$ and elements $b,b'$,
and assume $A\cong B$. Let $a\coloneqq bb'$.
Then:
\begin{enumerate}[\rm(i)]
\item $K\unlhd G$, and for all $k\in K$ we have
\[
bkb^{-1}=k^{-1}\qquad\text{and}\qquad b'kb'^{-1}=k^{-1}.
\]
\item $a\in C_G(K)$.
\item $ba^m=a^{-m}b$ for all $m\in\Z$.
\item $G=\langle K,a,b\rangle$ and $K\langle a\rangle\unlhd G$.
\item $G=K\langle a\rangle\langle b\rangle$.
\end{enumerate}
\end{lemma}

\begin{proof}
(i) As per \Cref{corclass}, $K$ is a normal subgroup. The second part follows directly from the fact that $A$ and $B$ are generalized quasi-dihedral groups on $K$.

(ii) Follows from \Cref{bb'}.

(iii) Utilizing \Cref{bak=(ka)^{-1}b} that appeared in \Cref{equivalence}, we can infer that $bab^{-1}=a^{-1}$, which leads to $ba^mb^{-1}=a^{-m}$ for any $m\in \Z$.

(iv) Observe that $G=\langle b,b', K\rangle$ and so $G=\langle K,a,b\rangle$. To establish that $K\langle a\rangle $ is a normal subgroup, it suffices to show that $x(ka^i)x^{-1}\in K\langle a\rangle$ for all $x\in \{b^{\pm 1},a^{\pm 1}\}\cup K$. From \Cref{bb'}, it follows that $a$ and elements of $K$ commute, implying that $k'(ka^i)k'^{-1}\in K\langle a\rangle$ and $a(ka^i)a^{-1}\in K\langle a\rangle$. By (iii), we have $ba^ib^{-1}=a^{-i}$. Thus, $bka^ib^{-1}=k^{-1}ba^{i}b^{-1}=k^{-1}a^{-i}$. Therefore, we have shown that $x(ka^i)x^{-1}\in K\langle a\rangle$ for all $x\in \{b^{\pm 1},a^{\pm 1}\}\cup K$.

(v) This is a direct consequence of (iv).
\end{proof}

\begin{coro}\label{normal}
Let $G, A, B, K, a, b$ as in \Cref{ba} and let $G=\langle S\rangle $. If $S\cap K\langle a \rangle \neq\emptyset$, then $\langle S\cap K\langle a \rangle\rangle\unlhd G$. 
\end{coro}

\begin{proof}
Since $K \langle a \rangle$ is an abelian subgroup, by \Cref{ba}(ii) we have $bgb^{-1}=b(ka^i)b^{-1}=k^{-1}a^{-i}=g^{-1}$ for every $g=ka^i \in K \langle a \rangle$.
By~\Cref{ba}(v), we know that $G=K\langle a\rangle \langle b\rangle$ and so we deduce that $\langle S\cap K\langle a \rangle\rangle\unlhd G$, as desired.
\end{proof}

\begin{coro}\label{form}
Let $G, A, B, K, a, b$ as in \Cref{ba}. If $g\in G$, then $g=ka^ib^j$, where $k\in K,i\in \Z$  and $j\in \{0,1\}$.   
\end{coro}
\begin{proof}
By \Cref{ba}(v), we have that $g=ka^ib^j$ for every $g \in G$, where $k\in K,i\in \Z$,  and $j\in \{0,1,2,3\}$ by \Cref{b4=1}.   
If $g=ka^ib^2$, then $b^2\in K$ and so $ab^2=b^2a$(see \Cref{bb'}). This implies that $g=k'a^i$ for $k'=kb^2\in K$.
If $g=ka^ib^3$, then $g=ka^ib^2b$ and so $g=k'a^ib$, where $k'\in K$.
\end{proof}

\begin{lemma}\label{order}
Let $G, A, B, K, a, b$ as in \Cref{ba}. The following statements hold:
\begin{enumerate}
\item $g\in G$ is torsion if and only if $g\in K$ or $g=ka^nb$, where $k\in K$ and $n\in \Z$.
\item $g\in G$ is not torsion if and only if $g=ka^n$, where $k\in K$ and $n\in \Z^*$.
\end{enumerate}    
\end{lemma}
\begin{proof}
Let $g$ be an arbitrary element in $G$.
It follows from \Cref{form} that $Kg$ is either $Ka^n$ or $Ka^nb$.\\
{\textsc{Case 1.}}
Assume that $Kg=Ka^nb$.
Then we have 
\begin{align}
Kg^2&=Ka^nba^nb\label{o}\\
    &=Ka^{n}a^{-n}bb\label{oo}\\
    &=Kb^2\nonumber\\
    &=K,\nonumber
\end{align}
where \Cref{oo} follows from \Cref{ba}(iii).
Since $b^2\in K$, we deduce that $g^2$ lies in $K$ and so $g$ has finite order.\\
{\textsc{Case 2.}}
Let $Kg=Ka^n$.
It remains to show that $G$ is not a torsion element whenever $g \in Ka^n$. Assume to contrary that $(ka^n)^{\ell}=1$, where $n\neq 0$ and so $a^{n\ell}\in K$. This yields a contradiction to the fact that $K$ is finite and $a$ is not a torsion element.    
\end{proof}

We know that very subgroup of a finite dihedral group is either cyclic or dihedral.
It is seeminlgy folklore that this property translates to the infinite dihedral group as well, but we couldn't find a reference for it. 

\begin{theorem}\label{semidihedral subgroup}
Let $G$ be a generalized quasi-dihedral group on $K$ and $b$.
Let $H$ be a subgroup of $G$. Then $H$ is either abelian or generalized quasi-dihedral. 
In particular, every subgroup of an infinite dihedral group is either cyclic or infinite dihedral. 
\end{theorem}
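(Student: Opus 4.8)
The natural plan is to split according to whether $H$ lies inside $K$. Since $[G:K]=2$, the quotient map $G\to G/K$ restricts on $H$ to a homomorphism with kernel $H\cap K$ and image contained in $G/K\cong\Z_2$, so $[H:H\cap K]\le 2$. If $H\subseteq K$, then $H$ is a subgroup of the abelian group $K$ and there is nothing to prove. So from now on assume $H\not\subseteq K$ and set $L:=H\cap K$, an abelian subgroup of index exactly $2$ in $H$.

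Next I would isolate a good finite-order generator of $H$ over $L$. Pick any $x\in H\setminus L$; then $x\in H\setminus K\subseteq G\setminus K=Kb$, say $x=kb$ with $k\in K$. Applying Lemma~\ref{b4=1}(i) to $b\in G\setminus K$ gives $bkb^{-1}=k^{-1}$, and a one-line computation then yields $x^2=kbkb=k\,(bkb^{-1})\,b^2=b^2$. Since $b^n=1$ by Definition~\ref{semidihedral}, we get $x^{2n}=(x^2)^n=(b^2)^n=1$, so $x$ has finite order; indeed every element of $H\setminus L$ does. Moreover $x^2=b^2\in K$, whence $x^2\in H\cap K=L$, and Lemma~\ref{b4=1}(i) applied to $x$ gives $x\ell x^{-1}\ell=1$ for every $\ell\in L$.

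It remains to recognise this data as a generalized quasi-dihedral structure. Because $x^2\in L$, the set $L\cup Lx$ is closed under multiplication, so $H=L\sqcup Lx$, with $L$ abelian, $x$ of finite order, and $x$ inverting $L$ --- precisely the ingredients of Definition~\ref{semidihedral}. Running the same coset normal-form argument that identifies $G$ with its defining presentation, as in the proof of Lemma~\ref{b4=1}(ii), then shows that $H$ is generalized quasi-dihedral on $L$ and $x$. The only point requiring care is checking that the evident presentation carries no superfluous relations --- equivalently, that the presented group maps bijectively onto the two cosets $L,Lx$ of $H$ --- and even this is routine, so I do not expect a genuine obstacle.

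For the ``in particular'' clause, specialise to $G=\Z_2\ast\Z_2=D_\infty$, which is generalized quasi-dihedral on $K=\langle ab\rangle\cong\Z$ with $b$ of order $2$. If $H\subseteq K$ then $H\le\Z$ is trivial or infinite cyclic. Otherwise $L=H\cap K\le\Z$ is trivial or infinite cyclic, and $x^2=b^2=1$ forces $\ord(x)=2$: if $L$ is trivial then $H=\langle x\rangle\cong\Z_2$ is cyclic, and if $L\cong\Z$ then $H=L\sqcup Lx$ with an order-$2$ element inverting $L$, so $H\cong D_\infty$.
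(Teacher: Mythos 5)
Your proposal is correct and follows essentially the same route as the paper: split on whether $H\subseteq K$, pick a coset representative $x\in H\setminus K$ (the paper writes it as $bk$), verify $x$ inverts $H\cap K$ and $x^2=b^2$ has finite order, and conclude that $H$ is generalized quasi-dihedral on $H\cap K$ and $x$. The presentation-theoretic caveat you flag is glossed over in the paper as well, and your explicit treatment of the $D_\infty$ case only adds detail the paper omits.
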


\begin{proof}
If $H$ is a subgroup of $K$, then $H$ is abelian and we are done.
Assume that $H\nsubseteq K$. Then $G=HK$ and so $[H:H\cap K]=[HK:K]=[G:K]=2$.
Since $G=K\sqcup Kb$, there is a $k \in K$ such that $bk\in H$ and so $H=(H\cap K)\sqcup (H\cap K)bk$.
Then for every $x\in K$, we have that $(bk)x(bk)^{-1}=bkxk^{-1}b^{-1}=bxb^{-1}=x^{-1}$.
We show that the order of $bk$ is finite.
Indeed, $(bk)^2=bkbk=bkk^{-1}b$ and so $(bk)^2=b^2$.
Since $G$ is a generalized quasi-dihedral group on $K$ and $b$, we know that the order $b$ is finite.
So $H$ is a generalized quasi-dihedral group on $H\cap K$ and $bk$, as desired.
\end{proof}


\subsection{Short cycles in Cayley graphs of two-ended GQD groups}\label{short cycles}


In this subsection, we prove some crucial facts about $6$- and $4$-cycles in the Cayley graphs of two-ended generalized quasi-dihedral groups. 
For the rest of the subsection, $G, A, B, K, a, b, b'$ are as in \Cref{ba}. Recall that $G = K \langle a \rangle \langle b \rangle$ by \Cref{ba}(v), where $a = bb'$.

\begin{theorem}\label{6-cycle}
Let $G=\langle S\rangle$ and let $s_1,s_2,s_3$ be three distinct elements in $G\sm K\langle a \rangle$. Then $s_1s_2s_3=s_3s_2s_1$. If $s_1,s_2,s_3 \in S\sm K\langle a \rangle$ in particular, then for every $g\in G$ the sequence of vertices $g,gs_1,gs_1s_2,gs_1s_2s_3,gs_3s_2,gs_3,g$ is a $6$-cycle in $\cay(G;S)$.
\end{theorem}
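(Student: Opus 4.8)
The plan is to route everything through the index-$2$ abelian subgroup $H := K\langle a\rangle$. By Lemma~\ref{ba} we have $G = K\langle a\rangle\langle b\rangle$, hence $G = H \sqcup Hb$ with $b^2 \in H$, and by the computation preceding Corollary~\ref{normal} conjugation by $b$ inverts $H$: one has $bh = h^{-1}b$ for every $h \in H$.

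First I would establish the identity $s_1s_2s_3 = s_3s_2s_1$. Each $s_i$ lies in $G\setminus H = Hb$, so write $s_i = t_ib$ with $t_i \in H$. Sliding each $b$ rightward past the $t_j$ via $bt = t^{-1}b$, and using that $b^2\in H$ commutes with all of $H$ (which is abelian), one computes
\[
 s_1s_2s_3 = (t_1b)(t_2b)(t_3b) = t_1t_2^{-1}b^2t_3b = t_1t_2^{-1}t_3\,b^3,
\]
and identically $s_3s_2s_1 = t_3t_2^{-1}t_1\,b^3$; since $H$ is abelian, $t_1t_2^{-1}t_3 = t_3t_2^{-1}t_1$, and the identity follows. (Distinctness of the $s_i$ plays no role here — the identity holds for any three elements of $G\setminus H$. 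Alternatively one can note $s_3(s_1s_2)s_3^{-1} = (s_1s_2)^{-1}$ since $s_1s_2\in H$, and finish using $s_i^2 = b^2$, $b^4=1$, and the centrality of $b^2$, all of which come from Lemmas~\ref{b4=1} and~\ref{ba}.)

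Now fix $g\in G$ and $s_1,s_2,s_3\in S\setminus H$, and set $w := gs_1s_2s_3$, which by the identity also equals $gs_3s_2s_1$. Then $g,gs_1,gs_1s_2,w$ and $g,gs_3,gs_3s_2,w$ are two walks of length $3$ in $\cay(G,S)$ (the successive steps being right-multiplication by $s_1,s_2,s_3$ and by $s_3,s_2,s_1$), so reversing the second and concatenating yields precisely the closed walk $g,gs_1,gs_1s_2,w,gs_3s_2,gs_3,g$ of the statement, all of whose steps are edges. To see it is a $6$-cycle I would verify the six listed elements are pairwise distinct: modulo $H$ exactly $g,gs_1s_2,gs_3s_2$ fall in $gH$ and $gs_1,gs_3,w$ in the other coset, so any coincidence forces $s_1=s_3$, $s_1s_2=1$, or $s_2s_3=1$; the first is excluded by hypothesis, and the remaining two are the degeneracies $s_2 = s_1^{-1}$ and $s_2 = s_3^{-1}$, which are excluded as well.

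The substantive content — that the three reflection-type generators may be commuted through the abelian part at the cost only of the fixed tail $b^3$ — falls straight out of the decomposition $G = H\sqcup Hb$ with $b$ inverting $H$. I therefore expect no genuine difficulty; the only care needed is sign bookkeeping when moving $b$ past $H$, and making explicit the non-degeneracy used in the $6$-cycle clause.
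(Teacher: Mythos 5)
Your proof of the identity $s_1s_2s_3=s_3s_2s_1$ is correct and is essentially the paper's own argument: the paper writes $s_i=k_ia^ib$ and pushes the $b$'s through $K\langle a\rangle$ using $bkb^{-1}=k^{-1}$, $ba^mb^{-1}=a^{-m}$ and the centrality of $b^2$, which is exactly your computation with $s_i=t_ib$, $t_i\in K\langle a\rangle$. Your extra observation that distinctness of the $s_i$ is irrelevant for the identity is also accurate.

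The one genuine gap is in your final non-degeneracy step. Your coset analysis mod $H=K\langle a\rangle$ is right, and it correctly reduces vertex-distinctness to excluding $s_1=s_3$, $s_2=s_1^{-1}$ and $s_2=s_3^{-1}$; but your claim that the latter two "are excluded as well" does not follow from the hypothesis that $s_1,s_2,s_3$ are distinct. In a genuinely quasi-dihedral situation the elements outside $K\langle a\rangle$ satisfy $s^2=b^2$ with $b^2\neq 1$ possible (Lemma~\ref{ba}(iv)), so $s_1\neq s_1^{-1}$, and a symmetric generating set typically contains both; taking $s_2=s_1^{-1}$ gives $gs_1s_2=g$ and the closed walk degenerates, so the six listed vertices are not distinct. (Only in the dihedral case $b^2=1$ does distinctness of the $s_i$ force the non-degeneracy.) To be fair, the paper's proof never addresses distinctness at all -- it proves only the commutation identity, and in the application inside the proof of Theorem~\ref{main} the degeneracies cannot occur because there $s_1=t_1$, $s_2=t_2$ are consecutive edge labels of a path (so $t_2\neq t_1^{-1}$) and $s,s^{-1}\notin S'$. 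So your write-up is more careful than the paper on this point, but as stated the sentence "which are excluded as well" needs either an added hypothesis ($s_2\notin\{s_1^{-1},s_3^{-1}\}$) or a justification valid in the intended use; as it stands it asserts something the hypotheses do not give you.
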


\begin{proof}
By~\Cref{form}, we know that that the elements of $G\sm K\langle a \rangle$ are of the form $ka^ib$.
So, let $s_1=k_{\ell}a^{\ell}b$, $s_2=k_{i}a^{i}b$ and $s_3=k_{j}a^{j}b$.
The following computation follows from~\Cref{semidihedral} and~\Cref{form} and completes the proof:
\begin{align*}
s_1s_2s_3=&(k_{\ell}a^{\ell}b)(k_{i}a^{i}b)(k_{j}a^{j}b) \\
  =&(k_{\ell}a^{\ell}b)k_{i}a^{i}k_j^{-1}a^{-j}b^2 \tag{push the second $b$ to the right}\\
   =&k_{\ell}a^{\ell}bk_j^{-1}a^{-j}k_ia^{i}b^2 \\
    =&k_ja^{j}k_{\ell}a^{\ell}bk_ia^{i}b^2 \tag{push the first $b$ to the middle and $k_ja^j$ to the left}\\
     =&(k_ja^{j}b)k_{\ell}^{-1}a^{-\ell}k_ia^{i}b^2\\
      =&s_3k_ia^{i}k_{\ell}^{-1}a^{-\ell}b^2\\
       =&s_3k_{i}a^{i}bk_{\ell}a^{\ell}b \tag{push a $b$ to the left}\\
        =&s_3s_2s_1. \hspace{8.9cm}\qed
\end{align*}
\renewcommand{\qedsymbol}{}
\end{proof}




\begin{theorem}\label{4-cycle}
Let $G=\langle S\rangle$ and let $s_1\in G\sm K\langle a\rangle$ and $s_2\in K\langle a\rangle$.
Then $s_1s_2 = s_2^{-1}s_1$.
In particular, if $S$ is symmetric and $s_1\in S\sm K\langle a\rangle$ and $s_2\in S\cap K\langle a\rangle$,
then for every $g\in G$ the vertices
$g,\; gs_1,\; gs_1s_2,\; gs_2^{-1},\; g$
form a $4$-cycle in $\cay(G;S)$.
\end{theorem}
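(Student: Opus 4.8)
The plan is to follow the template of Theorem~\ref{6-cycle}: first establish the algebraic relation, then read the $4$-cycle off it. For the relation I would use that $K\langle a\rangle$ is an abelian subgroup of index $2$ in $G$ with $G=K\langle a\rangle\sqcup K\langle a\rangle b$ — all of which is contained in Lemma~\ref{ba} (the factors $K$ and $\langle a\rangle$ are abelian and commute elementwise, $G=K\langle a\rangle\langle b\rangle$, and $b\notin K\langle a\rangle$ because otherwise $a=bab^{-1}=a^{-1}$, impossible as $a$ has infinite order). The heart of the matter is that conjugation by \emph{any} element of $G\setminus K\langle a\rangle$ inverts $K\langle a\rangle$ pointwise; this is exactly the computation already recorded just before Corollary~\ref{normal}, carried out for an arbitrary element rather than just a generator. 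Concretely, conjugation by $b$ inverts $K$ (Lemma~\ref{b4=1}(i)) and inverts $a$ (Lemma~\ref{ba}(iii)), hence inverts every element of the abelian group $K\langle a\rangle$; and any $s_1\in G\setminus K\langle a\rangle$ can be written $s_1=hb$ with $h\in K\langle a\rangle$, so that for every $s_2\in K\langle a\rangle$ one has $s_1 s_2 s_1^{-1}=h(bs_2 b^{-1})h^{-1}=h\,s_2^{-1}h^{-1}=s_2^{-1}$, the last equality because $h$ and $s_2^{-1}$ lie in the abelian subgroup $K\langle a\rangle$. Rearranged, $s_1 s_2=s_2^{-1}s_1$. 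Alternatively one can write $s_1=ka^{i}b$ and $s_2=k'a^{j}$ and push every occurrence of $b$ to the right, exactly as in the proof of Theorem~\ref{6-cycle}.

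Granting this, suppose $s_1\in S\setminus K\langle a\rangle$ and $s_2\in S\cap K\langle a\rangle$, and put $v_0=g$, $v_1=gs_1$, $v_2=gs_1 s_2$, $v_3=gs_2^{-1}$. Three of the four edges are present by the very definition of $\cay(G,S)$: $v_0 v_1$ has label $s_1$, $v_1 v_2$ has label $s_2$, and $v_3 v_0$ has label $s_2$. For the remaining edge I would invoke the relation above to rewrite $v_2=gs_1 s_2=gs_2^{-1}s_1=v_3 s_1$, so that $v_2 v_3$ is the edge labelled $s_1$. Finally $v_0,v_1,v_2,v_3$ are pairwise distinct: $v_1,v_2$ lie in the coset $gK\langle a\rangle b$ while $v_0,v_3$ lie in $gK\langle a\rangle$, so it is enough that $v_1\neq v_2$ and $v_0\neq v_3$, and both hold since $s_2\neq 1$. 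Thus $v_0 v_1 v_2 v_3$ is a $4$-cycle, traversing edges with labels $s_1,s_2,s_1^{-1},s_2$ in order.

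I expect no substantive obstacle — the statement is essentially a single-relator consequence of Lemmas~\ref{b4=1} and~\ref{ba} — but there is one point that needs care. Since $s_2^{-1}s_1 s_2=s_1 s_2^{2}$ rather than $s_1$, the $4$-cycle is forced to close at $gs_2^{-1}$, not at $gs_2$; the two agree precisely when $s_2^{2}=1$, which is the only situation in which the cycle is literally $g,gs_1,gs_1 s_2,gs_2,g$. It is therefore cleanest to regard this $4$-cycle as the boundary of the relator $s_1 s_2 s_1^{-1}s_2$ and to keep track of the inverse accordingly.
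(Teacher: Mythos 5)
Your proof is correct and follows essentially the same route as the paper: the paper's own argument writes $s_1=k_ia^ib$, $s_2=k_ja^j$ and pushes $b$ through, which is exactly your alternative computation, and your conceptual version (conjugation by any element of $G\setminus K\langle a\rangle$ inverts the abelian subgroup $K\langle a\rangle$) is the identity already recorded before Corollary~\ref{normal}. Your closing caveat is also on target: the paper's displayed computation actually ends with $k_j^{-1}a^{-j}k_ia^ib=s_2^{-1}s_1$, so the relation should read $s_1s_2=s_2^{-1}s_1$ (the printed $s_1^{-1}s_2$ would force $s_1^2=1$, which fails whenever $b^2\neq 1$, cf.\ Lemma~\ref{ba}(iv)), and correspondingly the $4$-cycle closes at $gs_2^{-1}$ rather than $gs_2$, the two coinciding exactly when $s_2^2=1$; this corrected form is what the paper actually uses later (e.g.\ the relation ``$xk=k^{-1}x$'' in Subcase~i of the main proof), so your version is the intended statement.
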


\begin{proof}
Write $s_1=x b$ with $x\in K\langle a\rangle$ and $s_2=y$ with $y\in K\langle a\rangle$.
Since $b y b^{-1}=y^{-1}$, we have $by=y^{-1}b$ and hence
\[
s_1s_2=(xb)y = x(by)=x(y^{-1}b)=y^{-1}(xb)=s_2^{-1}s_1,
\]
using that $K\langle a\rangle$ is abelian.
For the $4$-cycle, we note that
\[
(gs_2^{-1})s_1 = g(s_2^{-1}s_1)=g(s_1s_2)=gs_1s_2,
\]
so indeed $gs_1s_2$ is adjacent to $gs_2^{-1}$ by an $s_1$-edge.
\end{proof}


\section{Grids, Walls and Cylinders}\label{sec:walls}

As we already mentioned in the Introduction, the proof of~\Cref{main} is based on extracting grids and cylindrical walls as spanning subgraphs. In this short section, we discuss the Hamiltonicity of the grid-like structures.

The \emph{Cartesian product} $G\Box H$  of two graphs $G$ and $H$ is the graph with vertex set $V(G)\times V(H)$, 
where vertices $(a,x)$ and $(b,y)$ are adjacent whenever $ab\in E(G)$ and $x=y$, or $a=b$
and $xy\in E(H)$.

Let $P_k$ denote the path graph on $k$ vertices and $D$ the double ray graph. 
The \emph{(two-ended) infinite grid} $\mathcal G_k$ of height $k$ is the graph $P_k \Box D$. 
The next lemma provides a way to extract a spanning grid and, consequently, a Hamiltonian double ray from a graph.

\begin{lemma}{\rm{\cite[Lemma 3.9]{miraftab2017cycles}}}
\label{spanning grid}
Let $G,H$ be two-ended locally finite graphs and let $G_1,\ldots, G_n$ be disjoint subgraphs of $G$ such that 
\begin{enumerate}[{\rm(i)}]
\item $\bigsqcup_{i=1}^n V(G_i) = V(G)$, and  
\item every $G_i$ contains a spanning subgraph $H_i$, which is isomorphic to~$H$ by means of an isomorphism~$\phi_i: H \rightarrow H_i$, and 
\item for every ${i < n}$ and every~${v \in H_i}$ there is an edge between~$v$ and~${\phi_{i+1} \circ  \phi^{-1}_i(v)}$.
\end{enumerate}
Let $R$ be a spanning double ray of $H$. Then $G$ contains a spanning infinite grid of height $n$. In particular, it contains a Hamiltonian double ray and a Hamiltonian circle\footnote{See page 17 for the definition of circle.}.
\end{lemma}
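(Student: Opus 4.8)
The plan is to build the spanning grid by an explicit identification of the vertex set of $\mathcal G_n=P_n\Box D$ with $V(G)$ that carries the abstract grid edges to edges of $G$, and then to note that $\mathcal G_n$ itself contains the required curves, which transport back along this identification. Fix a bi-infinite enumeration $(r_j)_{j\in\Z}$ of the vertices of the spanning double ray $R$ of $H$, so that $r_jr_{j+1}\in E(R)\subseteq E(H)$ for every $j$ and every vertex of $H$ occurs exactly once. Writing the vertex set of $\mathcal G_n$ as $\{1,\dots,n\}\times\Z$ (the $i$ indexing the vertices of $P_n$, the $j$ indexing the copies of $D$), I would define
\[
\Psi\colon\{1,\dots,n\}\times\Z\longrightarrow V(G),\qquad \Psi(i,j)=\phi_i(r_j).
\]

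First I would check that $\Psi$ is a bijection. As each $H_i$ is a \emph{spanning} subgraph of $G_i$ we have $V(H_i)=V(G_i)$, so by (i) the sets $V(H_1),\dots,V(H_n)$ partition $V(G)$; each $\phi_i$ restricts to a bijection $V(H)\to V(H_i)$; and $j\mapsto r_j$ is a bijection $\Z\to V(H)$ because $R$ is spanning and has no repeated vertex. Composing, $\Psi$ is a bijection onto $V(G)$.

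Next I would verify that $\Psi$ sends every edge of $\mathcal G_n$ to an edge of $G$. A horizontal edge joins $(i,j)$ to $(i,j+1)$ and maps to $\phi_i(r_j)\phi_i(r_{j+1})$, which is an edge of $H_i\subseteq G$ since $r_jr_{j+1}\in E(H)$ and $\phi_i$ is a graph isomorphism. A vertical edge joins $(i,j)$ to $(i+1,j)$ with $i<n$ and maps to $\phi_i(r_j)\phi_{i+1}(r_j)$; applying hypothesis (iii) to the vertex $v=\phi_i(r_j)\in H_i$ yields exactly an edge of $G$ between $v$ and $\phi_{i+1}\circ\phi_i^{-1}(v)=\phi_{i+1}(r_j)$. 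Hence $\Psi$ is an isomorphism from $\mathcal G_n$ onto a spanning subgraph of $G$, so $G$ contains a spanning infinite grid of height $n$.

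Finally I would observe that $\mathcal G_n$ itself contains a Hamiltonian double ray and a Hamiltonian circle. A Hamiltonian double ray is the familiar boustrophedon: over the columns $j\ge 1$ take the Hamiltonian ray of the one-way grid that sweeps each column from row $1$ to row $n$ and back, turning by a single horizontal edge at row $1$ or $n$; mirror it over the columns $j\le -1$; and join the two halves through column $0$ by $(1,-1),(1,0),(2,0),\dots,(n,0),(n,1)$, orienting each half so its finite end matches. For the Hamiltonian circle, assume $n\ge 2$ (if $n=1$ then $\mathcal G_1=D$ and only the double ray is at issue): let one double ray be the row $\{(1,j):j\in\Z\}$ and a second, vertex-disjoint one be a Hamiltonian double ray of the subgrid on rows $2,\dots,n$, which is a copy of $P_{n-1}\Box D$; together they span $\mathcal G_n$, and since deleting column $0$ shows $\mathcal G_n$ has exactly two ends — the $+\infty$ and $-\infty$ column directions — each of these two double rays has a tail in each end, so their union together with the two ends is a circle through every vertex. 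I expect no serious obstacle here: the argument is essentially bookkeeping, the only mildly delicate point being the pairing of tails with ends in this last step (and the convention one adopts for the circle when $n=1$).
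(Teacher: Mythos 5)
Your construction is correct, and since the paper only cites this lemma from \cite{miraftab2017cycles} without reproducing a proof, there is nothing in the text to diverge from: the explicit identification $\Psi(i,j)=\phi_i(r_j)$, the check that horizontal edges come from $E(H_i)$ and vertical edges from hypothesis (iii), and the boustrophedon double ray together with the splitting into row $1$ plus a snake on rows $2,\dots,n$ for the circle is exactly the standard argument the cited lemma rests on. Your caveat about $n=1$ is well taken --- $P_1\Box D$ is just a double ray and carries no Hamiltonian circle, so the final clause of the statement implicitly needs $n\geq 2$; this is harmless for the paper, whose Hamiltonian-circle corollary assumes degree at least three. One tiny point worth making explicit: after transporting the two spanning double rays back into $G$, you should conclude via the combinatorial criterion of Lemma~\ref{What is HC} applied to $G$ itself (using that $G$ is two-ended by hypothesis, so each double ray still has a tail converging to each end of $G$), rather than only inside the spanning grid, since circles in a spanning subgraph do not automatically remain circles in the ambient compactification.
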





The \emph{wall graph} $W_k$ of height $k$ is the graph with vertex set $V(W_k)=\Z \times \Z_k$ and edge set $E(W_k)=E_1 \cup E_2 \cup E_3$, where 
\begin{align*}
    E_1&=\{\{(n,m),(n+1,m)\} \mid n\in \Z, m=0,\ldots,k-1\}, \\
    E_2&=\{\{(2n,m),(2n,m+1)\} \mid n\in \Z, m\equiv 0 \ (\rm{mod} 2)\}, \\
    E_3&=\{\{(2n+1,m),(2n+1,m+1)\} \mid n\in \Z, m\equiv 1 \ (\rm{mod} 2)\}.
\end{align*}
For $k+l$ even, the \emph{twisted cubic cylinder} $\overline{W}_{k,l}$ of height $k$ and twist $l\in \Z$ is the graph with vertex set $V(\W_{k,l})=V(W_k)=\Z \times \Z_k$ and edge set $E(\W_{k,l})=E(W_k) \cup E'_{l}$, where
\begin{itemize}

\item $E'_{l}=\{\{(2n+1,k-1),(2n+1+l,0)\} \mid n\in \Z\}$, when $k$ is even,
\item $E'_{l}=\{\{(2n,k-1),(2n+l,0)\} \mid n\in \Z\}$, when $k$ is odd.

\end{itemize}

\noindent
The edges in $E_2 \cup E_3$ are called \emph{straight} and the edges in $E'_{l}$ are called \emph{twisted}. The \emph{$m$-th row} $R_m$ of $W_{k}$ and $\W_{k,l}$ is the double ray induced by $\Z \times \{m\}$. 
For $j\geq 1$ and $i\in \Z$, the \emph{$i$-th block} $B_{i,2j}$ of length $2j$ of $\W_{k,l}$ is the graph induced by the vertices
\begin{itemize}
    \item  $(2i+1+n,m)$, $n=0,\ldots,2j-1$, $m\in \Z_k$, when $k$ is even. The \emph{$i$-th snake} $S_{i,2j}$ of length $2j$ is then the unique Hamiltonian path from $(2i+1,0)$ to $(2i+1,k-1)$ in $B_{i,2j}$.
    \item  $(2i+1-n,m)$, $n=0,\ldots,2j-1$, $m\in \Z_k$, when $k$ is odd. The \emph{$i$-th snake} $S_{i,2j}$ of length $2j$ is then the unique Hamiltonian path from $(2i+1,0)$ to $(2i+2-2j,k-1)$ in $B_{i,2j}$.
\end{itemize}
\begin{figure}[H]
    \centering
    \includegraphics[scale=0.8]{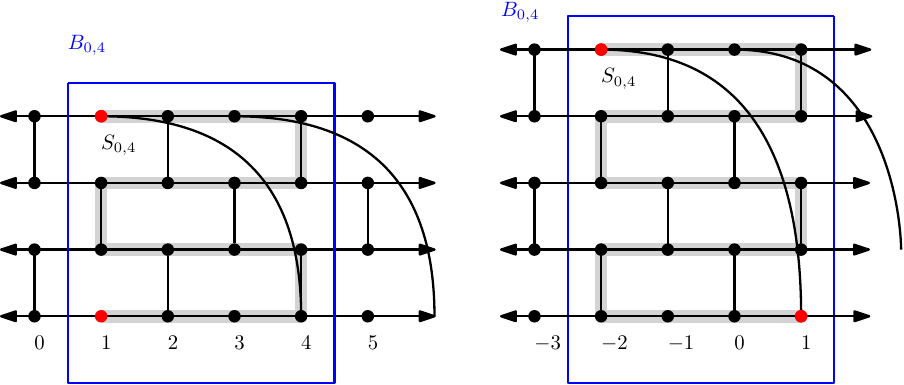}
    \caption{The gray path in the left picture depicts $S_{0,4}$ in $\overline{W}_{4,4}$ and the gray path in the right picture depicts $S_{0,4}$ in $\overline{W}_{5,3}$.}
    \label{fig:my_label}
\end{figure}
In particular, the \emph{$i$-th column} $Q_i$ of $\W_{k,l}$ is the path $S_{i,2}$.
The \emph{$i$-th staircase} $\Gamma_i$ of $\W_{k,l}$ is the unique path of length $2k-1$ from $(2i+1,0)$ to $(2i+1+k,k-1)$.\\

\begin{figure}[H]\label{twisted cylinder}

\includegraphics[scale=0.9]{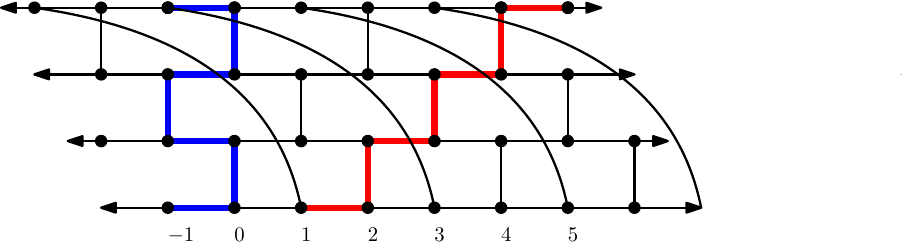}
\caption{A column (blue) and a staircase (red) in the twisted cubic cylinder $\W_{4,4}$.}
\end{figure}

The following lemma captures a surprising property of twisted cylinders: stretching a given twisted cylinder along carefully chosen double rays can transform it into a twisted cylinder with different parameters (see~\Cref{cylinder isomorphism proof}).

\begin{lemma}\label{cylinder isomorphism}
For any $k\geq 2$ and $l \in \N$ with $k+l$ even, the twisted cylinders $\W_{k,l}$ and $\W_{\frac{k+l}{2}, \frac{3k-l}{2}}$ are isomorphic.
\end{lemma}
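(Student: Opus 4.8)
Write $k' = \tfrac{k+l}{2}$ and $l' = \tfrac{3k-l}{2}$, and note at the outset the identity $k'+l'=2k$ — this is what makes the two graphs coincide. The plan is to exhibit a concrete isomorphism: I would decompose $\W_{k,l}$ into $k'$ pairwise disjoint double rays together with a perfect matching, and then recognise this decomposition as the ``rows plus straight/twisted edges'' decomposition of $\W_{k',l'}$, after a relabelling of the vertices.

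First I would build the double rays out of staircases. Recall that $\Gamma_j$ is the monotone staircase from $(2j+1,0)$ to $(2j+1+k,k-1)$; explicitly, at position $p\in\{0,\dots,2k-1\}$ it is at the vertex $(2j+1+\lceil p/2\rceil,\,\lfloor p/2\rfloor)$, so it occupies the columns $2j+1+m$ and $2j+2+m$ in row $m$, and hence the $\Gamma_j$ ($j\in\Z$) partition $V(\W_{k,l})$. The key structural point is that, for every $j$, the twisted edge at the terminal vertex $(2j+1+k,k-1)$ of $\Gamma_j$ leads to $(2j+1+k+l,0)=(2(j+k')+1,0)$, i.e.\ to the initial vertex of $\Gamma_{j+k'}$; this uses $2k'=k+l$ and works for both parities of $k$. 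So, for each residue $i\in\{0,\dots,k'-1\}$, gluing the staircases $\Gamma_j$ with $j\equiv i\pmod{k'}$ along these twisted edges produces a double ray $\mathcal D_i$, and $\mathcal D_0,\dots,\mathcal D_{k'-1}$ partition $V(\W_{k,l})$.

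Next I would see which edges are consumed by the $\mathcal D_i$. Along a staircase the edges alternate horizontal and straight, so the double rays together use every straight edge and every twisted edge of $\W_{k,l}$ and, in each row, exactly half of the horizontal edges; the edges lying on no $\mathcal D_i$ are then precisely the horizontal edges $(c,m)(c+1,m)$ with $c\equiv m\pmod 2$, and they form a perfect matching $M$. Now I would relabel: define $\varphi\colon V(\W_{k,l})\to\Z\times\Z_{k'}=V(\W_{k',l'})$ on the vertex at position $p$ of $\Gamma_j$, with $i=j\bmod k'$ and $q=(j-i)/k'$, by $\varphi(v)=(2kq+i+p+1,\,i)$. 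Letting $q$ and $p$ range shows $\varphi$ maps each $\mathcal D_i$ bijectively and order‑preservingly onto the row $\Z\times\{i\}$; since the $\mathcal D_i$ partition the domain and the rows partition the range, $\varphi$ is a bijection that by construction carries the edges of the double rays $\mathcal D_i$ onto the horizontal edges $E_1$ of $\W_{k',l'}$.

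The remaining step — and the one I expect to be the real work — is to verify that $\varphi$ carries $M$ onto $E_2\cup E_3\cup E'_{l'}$. A matching edge $(c,m)(c+1,m)$ with $c\equiv m$ joins the position‑$(2m{+}1)$ vertex of some $\Gamma_j$ to the position‑$2m$ vertex of $\Gamma_{j+1}$, hence joins $\mathcal D_i$ to $\mathcal D_{i+1}$ with indices taken modulo $k'$. When $0\le i<k'-1$ a short computation shows $\varphi$ sends it to $(C,i)(C,i+1)$ with $C\equiv i\pmod 2$, a straight edge in $E_2$ (if $i$ is even) or $E_3$ (if $i$ is odd) — the $+1$ in the definition of $\varphi$ being exactly what fixes the parity of $C$. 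When $i=k'-1$ it is sent to $(C,k'-1)(C',0)$ with $C'-C=2k-k'$; since $2k-k'=\tfrac{3k-l}{2}=l'$ and the parity of $C$ is again the right one, this edge lies in $E'_{l'}$. One then checks these images exhaust $E_2\cup E_3$ and $E'_{l'}$, so $\varphi$ is the desired isomorphism. The genuine obstacle is not conceptual but bookkeeping: one must track how the first coordinate accumulates as a double ray crosses from one staircase to the next through a twisted edge, and confirm that the offsets reconcile after running through all $k'$ double rays — which they do precisely because $2k-k'=l'$, i.e.\ precisely for the twist $l'=\tfrac{3k-l}{2}$.
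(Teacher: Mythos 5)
Your proof is correct and follows essentially the same route as the paper: the paper's proof also concatenates the staircases $\Gamma_{i\frac{k+l}{2}+r}$ along twisted edges into double rays and declares these to be the rows of $\W_{\frac{k+l}{2},\frac{3k-l}{2}}$, leaving the verification as an observation. You have simply made explicit the relabelling $\varphi$ and the bookkeeping (straight edges landing in $E_2\cup E_3$ with the right parity, matching edges between the last and first row landing in $E'_{l'}$ because $2k-\frac{k+l}{2}=\frac{3k-l}{2}$) that the paper leaves implicit.
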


\begin{proof}
For $r=0,\ldots,\frac{k+l}{2}-1$, let $D_r$ be the double ray obtained by the concatenation of the all staircases $\Gamma_{i\frac{k+l}{2}+r}$ along twisted edges. We observe that $\W_{k,l}$ is isomorphic to $\W_{\frac{k+l}{2}, \frac{3k-l}{2}}$ with rows $D_0,\ldots,D_{\frac{k+l}{2}-1}$.
\end{proof}

\begin{figure}[!h]
\begin{center}
\begin{tikzpicture}
\begin{scope}[scale=0.85]

\draw[<->,>=latex'] (-0.5,1)--(8.5,1);
\draw[<->,>=latex'] (0,0)--(9,0);
\draw[<->,>=latex'] (0.3,-1)--(9.5,-1);
\draw[<->,>=latex'] (0.6,-2)--(10,-2);

\draw (1,1)--(1,0);
\draw (3,1)--(3,0);
\draw (5,1)--(5,0);
\draw (7,1)--(7,0);
\draw (2,0)--(2,-1);
\draw (4,0)--(4,-1);
\draw (6,0)--(6,-1);
\draw (8,0)--(8,-1);
\draw (9,-2)--(9,-1);
\draw (7,-2)--(7,-1);
\draw (5,-2)--(5,-1);
\draw (3,-2)--(3,-1);
\draw (1,-2)--(1,-1);

\begin{scope}[ultra thick, draw=blue]
\draw (0,-2)--(1,-2);
\draw (1,-1)--(2,-1);
\draw (3,0)--(2,0);
\draw (3,1)--(4,1);
\draw (2,0)--(2,-1);
\draw (1,-2)--(1,-1);
\draw (3,1)--(3,0);
\end{scope}

\begin{scope}[shift={(6,0)}, ultra thick, draw=blue]
\draw (0,-2)--(1,-2);
\draw (1,-1)--(2,-1);
\draw (3,0)--(2,0);
\draw (3,1)--(4,1);
\draw (2,0)--(2,-1);
\draw (1,-2)--(1,-1);
\draw (3,1)--(3,0);
\end{scope}

\begin{scope}[shift={(2,0)}, ultra thick, draw=orange]
\draw (0,-2)--(1,-2);
\draw (1,-1)--(2,-1);
\draw (3,0)--(2,0);
\draw (3,1)--(4,1);
\draw (2,0)--(2,-1);
\draw (1,-2)--(1,-1);
\draw (3,1)--(3,0);
\end{scope}

\begin{scope}[shift={(8,0)}, ultra thick, draw=orange]
\draw (0,-2)--(1,-2);
\draw (1,-1)--(2,-1);
\draw (3,0)--(2,0);
\draw (3,1)--(4,1);
\draw (2,0)--(2,-1);
\draw (1,-2)--(1,-1);
\draw (3,1)--(3,0);
\end{scope}

\begin{scope}[shift={(4,0)}, ultra thick, draw=green]
\draw (0,-2)--(1,-2);
\draw (1,-1)--(2,-1);
\draw (3,0)--(2,0);
\draw (3,1)--(4,1);
\draw (2,0)--(2,-1);
\draw (1,-2)--(1,-1);
\draw (3,1)--(3,0);
\end{scope}

\begin{scope}[shift={(-2,0)}, draw=green]
\draw (0,-2)--(1,-2);
\draw (1,-1)--(2,-1);
\draw (3,0)--(2,0);
\draw (3,1)--(4,1);
\draw (2,0)--(2,-1);
\draw (1,-2)--(1,-1);
\draw (3,1)--(3,0);
\end{scope}

\draw [bend left, ultra thick, orange] (0,1) to (2,-2);
\draw [bend left, ultra thick, green] (2,1) to (4,-2);
\draw [bend left, ultra thick, blue] (4,1) to (6,-2);
\draw [bend left, ultra thick, green] (8,1) to (9.8,-1.8);
\draw [bend left, ultra thick, orange] (6,1) to (8,-2);

\begin{scope}[very thick, draw=red]
\draw[decorate, decoration=snake] (2,1)--(3,1);
\draw[decorate, decoration=snake] (1,0)--(2,0);
\draw[decorate, decoration=snake] (0,-1)--(1,-1);
\draw[decorate, decoration=snake] (-1,-2)--(0,-2);
\end{scope}

\begin{scope}[shift={(6,0)}, very thick, draw=red]
\draw[decorate, decoration=snake] (2,1)--(3,1);
\draw[decorate, decoration=snake] (1,0)--(2,0);
\draw[decorate, decoration=snake] (0,-1)--(1,-1);
\draw[decorate, decoration=snake] (-1,-2)--(0,-2);
\end{scope}

\foreach \i in {1,2,...,8}{
\draw (\i,1) node [circle,fill, inner sep=2pt] {};
\draw (\i,0) node [circle,fill, inner sep=2pt] {};
\draw (\i,-1) node [circle,fill, inner sep=2pt] {};
\draw (\i,-2) node [circle,fill, inner sep=2pt] {};
\draw (0,1) node [circle,fill, inner sep=2pt] {};
\draw (9,-1) node [circle,fill, inner sep=2pt] {};
\draw (9,-2) node [circle,fill, inner sep=2pt] {};
}

\begin{scope}[shift={(0.5,0.5)}]
\end{scope}
\end{scope}
\end{tikzpicture}
\end{center}
\caption{The double rays of $\W_{4,2}$ that serve as rows of $\W_{3,5}$. The red wavy edges correspond to twisted edges between the first(blue) and last(green) double ray of $\W_{3,5}$.}
\label{cylinder isomorphism proof}
\end{figure}

\begin{lemma}\label{cylinder double ray}
For any $k\geq 2$ and $l \in \N$ with $k+l$ even, the twisted cubic cylinder $\W_{k,l}$ contains a Hamiltonian double ray.
\end{lemma}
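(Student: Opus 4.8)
The plan is to normalise the twist using Lemma~\ref{cylinder isomorphism} together with the reflection isomorphism $\W_{k,l}\cong\W_{k,-l}$ (the map $(n,m)\mapsto(-n,m)$ preserves $E_1,E_2,E_3$ and sends twisted edges to twisted edges), and then to exhibit a Hamiltonian double ray explicitly. By reflection we may assume $l\geq 0$; and if $l>k$, one application of Lemma~\ref{cylinder isomorphism} replaces $\W_{k,l}$ by $\W_{\frac{k+l}{2},\frac{3k-l}{2}}$, where $\frac{3k-l}{2}<k<\frac{k+l}{2}$, so after one further reflection (needed only when $l>3k$) we obtain a cylinder whose twist lies in $\{0,\dots,\frac{k+l}{2}\}$. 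Hence it suffices to construct a Hamiltonian double ray of $\W_{k,l}$ when $0\leq l\leq k$.

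For such a cylinder I would work with the staircases $\G_i$. As is implicit in the proof of Lemma~\ref{cylinder isomorphism}, the $\G_i$ ($i\in\Z$) partition $V(\W_{k,l})$ into internally disjoint paths on $2k$ vertices, each twisted edge joins the top vertex of some $\G_i$ to the bottom vertex of $\G_{i+\frac{k+l}{2}}$, and consecutive staircases $\G_i,\G_{i+1}$ run side by side and are joined by a long family of ``rung'' edges. The idea is to thread a double ray through the staircases in their natural cyclic order $\dots,\G_{-1},\G_0,\G_1,\dots$ in boustrophedon fashion --- entering $\G_i$ near one end, walking towards the other, but detouring through some of the rungs to $\G_{i-1}$ and $\G_{i+1}$ to sweep up precisely the vertices a plain boustrophedon would skip, and using the twisted edges to match up the two ends of the pattern at infinity. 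An alternative is to group the staircases into finitely many classes and apply Lemma~\ref{spanning grid} with $H$ a double ray, the pieces' spanning double rays coming from (limits of) the snakes $S_{i,2j}$ and the vertical links from the rungs; the small cases $k\in\{2,3\}$, which already cover all Cayley graphs of $\Z_2\ast\Z_2$, can be drawn out by hand.

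The main obstacle is that the isomorphisms give no real shortcut: iterating Lemma~\ref{cylinder isomorphism} together with reflections only permutes a bounded set of parameter pairs --- for instance $\W_{k,k}$ is isomorphic, via these moves, to no other $\W_{k',l'}$ --- so there is no reduction to finitely many base cases and the construction must be uniform in $k$ and $l$. The hard part is therefore the bookkeeping: arranging the detours so that the resulting subgraph is $2$-regular and meets every vertex exactly once for an arbitrary admissible twist, while also handling the parity split between $k$ even and $k$ odd in the definitions of staircases, snakes and twisted edges. Once the pattern is fixed, verifying that it is a spanning double ray is routine.
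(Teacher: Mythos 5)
Your reduction step is fine as far as it goes (the reflection $(n,m)\mapsto(-n,m)$ is indeed an isomorphism $\W_{k,l}\cong\W_{k,-l}$, and one application of Lemma~\ref{cylinder isomorphism} plus possibly a reflection does bring the twist into the range $0\leq l\leq k$), and your observation that these moves cannot reduce everything to finitely many base cases is correct --- $\W_{k,k}$ is a fixed point of the transformation. But this is exactly why the remainder of your argument has a genuine gap: after the normalization, the Hamiltonian double ray itself is never constructed. ``Thread a double ray through the staircases in boustrophedon fashion, detouring through some of the rungs to sweep up the skipped vertices'' specifies neither which rungs are used nor why the result is connected and $2$-regular, and your closing admission that ``the hard part is the bookkeeping \ldots once the pattern is fixed, verifying it is routine'' concedes that the pattern is not fixed. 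The alternative you mention (grouping staircases and applying Lemma~\ref{spanning grid}) is likewise not carried out, and it is not clear it applies: Lemma~\ref{spanning grid} needs vertex-disjoint pieces each spanned by a copy of the same graph $H$ with a perfect matching between consecutive copies, and no such decomposition of $\W_{k,l}$ is exhibited. So what you have is a proof plan, not a proof.

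For comparison, the paper's proof supplies precisely the explicit pattern you defer, and it uses Lemma~\ref{cylinder isomorphism} in the opposite direction: rather than shrinking the twist, it needs the twist to be at least $2$ and invokes the isomorphism only to dispose of the degenerate cases $l=0$ and $l=1$ (noting $\tfrac{3k-l}{2}\geq 2$ for the image). In the main cases the double ray is written down directly in terms of the snakes: when $k$ and $l$ are even, concatenate the snakes $S_{il,l}$, $i\in\Z$, along twisted edges; when $k$ and $l$ are odd, pick positive even $l_1,l_2$ with $l_1+l_2=l+1$ and alternate the snakes $S_{il,l_1}$ and $S_{il-l_1,l_2}$ along twisted edges. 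Each snake is by definition a Hamiltonian path of its block between the two vertices that the twisted edges attach to, so spanning and $2$-regularity are immediate --- this is the uniform-in-$k,l$ construction your sketch is missing. To repair your write-up you would either have to spell out your boustrophedon/detour pattern and verify it vertex by vertex, or simply adopt the snake concatenation, at which point your normalization to $0\leq l\leq k$ becomes unnecessary.
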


\begin{proof}
If $k$ and $l$ are even $\geq 2$, we obtain a Hamiltonian double ray by concatenating all snakes $S_{il,l}$, $i \in \Z$ along twisted edges.

If $k$ and $l$ are odd $\geq 3$, let $l_1,l_2$ be positive even numbers such that $l_1+l_2=l+1$. We then obtain a Hamiltonian double ray by concatenating all snakes $S_{il,l_1}$ and $S_{il-l_1,l_2}$, $i \in \Z$ in an alternating fashion along twisted edges.

It remains to check the cases when $l=0$ (and $k$ necessarily even) or $l=1$ (and $k$ odd). For $r=0,\ldots,k+l-1$, let $D_r$ be the double ray obtained by the concatenation of the all staircases $\Gamma_{i\frac{k+l}{2}+r}$ along twisted edges. 
We conclude the proof by invoking~\Cref{cylinder isomorphism} and noticing that $\frac{3k-l}{2} \geq 2$.
\end{proof}

\begin{figure}[!h]
    \centering
    \includegraphics[scale=0.8]{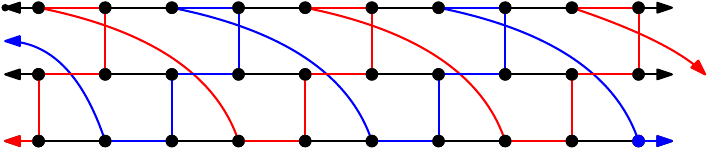}
    \caption{The union of the blue and red double rays covers all vertices of $\W_{3,3}$}
    \label{fig:enter-label}
\end{figure}

\begin{lemma}\label{cylinder circle}
For any $k\geq 2$ and $l \in \N$, the twisted cubic cylinder $\W_{k,l}$ contains a disjoint union of two double-rays which together span $\W_{k,l}$.
\end{lemma}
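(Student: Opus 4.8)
The plan is to follow the strategy of Lemma~\ref{cylinder double ray}, where one Hamiltonian double ray is assembled from a single family of boustrophedon ``snakes'' glued along twisted edges. Here I would instead split the cylinder lengthwise into \emph{two} parallel families of snakes and glue each family into a double ray of its own, so that the two resulting double rays are disjoint and together span $\W_{k,l}$.

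Two preliminary reductions make this manageable. If $k=2$ there is nothing to prove: the two rows $R_0$ and $R_1$ already form a disjoint union of two spanning double rays. For $k\geq 3$, Lemma~\ref{cylinder isomorphism} lets us assume $l\geq k$ — if $l<k$ then $\W_{k,l}\cong\W_{(k+l)/2,\,(3k-l)/2}$, whose twist exceeds its height (the difference being $k-l>0$) and whose height is still at least $2$, dropping us back to the case $k=2$ if it equals $2$. So from now on $k\geq 3$ and $l\geq k$; in particular $l$ has the same parity as $k$, and $l\geq 4$ when $k$ is even.

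Now I would carry out the construction for even $k$. Partition the columns of $\W_{k,l}$ into consecutive blocks of width $l$, the $i$-th block being the columns $li+1,\dots,li+l$, so that the twist shift $l$ carries block $i$ onto block $i+1$ column-for-column. Split each block into its first two columns (the \emph{thin} part) and its remaining $l-2\geq 2$ columns (the \emph{thick} part). In the thin part put the width-$2$ boustrophedon snake from $(li+1,0)$ to $(li+1,k-1)$, and in the thick part the width-$(l-2)$ boustrophedon snake from $(li+3,0)$ to $(li+3,k-1)$; because $k$ is even, each snake starts and ends on the leftmost column of its part, and both $li+1$ and $li+3$ are odd. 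Consequently, for every $i$ there is a twisted edge from the top endpoint $(li+1,k-1)$ of the thin snake of block $i$ to the bottom endpoint $(l(i+1)+1,0)$ of the thin snake of block $i+1$, and a twisted edge joining $(li+3,k-1)$ to $(l(i+1)+3,0)$. Gluing all thin snakes along the first family of twisted edges yields a double ray $D_1$, gluing all thick snakes along the second yields a double ray $D_2$, and by construction $D_1$ consists of the columns $\equiv 1,2\pmod l$ while $D_2$ consists of all the remaining columns, each of $D_1,D_2$ meeting every row of each of its columns; hence $D_1$ and $D_2$ are disjoint and together span $\W_{k,l}$.

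For odd $k$ the scheme is the same in spirit, but the book-keeping becomes genuinely more involved, and this is where I expect the main difficulty to lie. Here the twist shift $l$ is odd, so a brick-wall snake running through all $k$ rows must have even width and drift sideways as it climbs; as in the odd case of Lemma~\ref{cylinder double ray} one is therefore forced to alternate two even widths $l_1,l_2$ with $l_1+l_2=l+1$. The catch is that the naive layout glues the exit of an $l_1$-snake to the entrance of an $l_2$-snake, which would braid the two families into a \emph{single} double ray rather than two; obtaining two disjoint double rays instead calls for a more careful placement of the sub-blocks — or for interleaving two such alternating patterns on complementary columns — and checking that the twisted edges then line up correctly is the crux. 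The even-height construction above is the template, and in all cases Lemma~\ref{cylinder isomorphism} is invoked to reduce to a height/twist pair with enough room for the split.
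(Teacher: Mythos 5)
Your even-height construction is sound and is essentially the paper's argument: the paper also builds the two double rays from two parallel families of snakes of even widths $l_1,l_2$ with $l_1+l_2=l$, each family glued along its own twisted edges (your choice $l_1=2$, $l_2=l-2$ is just a special case). But the proposal has a genuine gap: the odd-height case is not proved, only described as ``the crux''. This case cannot be avoided. Indeed, your own reduction makes it unavoidable even when the input height is even: applying Lemma~\ref{cylinder isomorphism} to reach $l\geq k$ can flip the parity (e.g.\ $\W_{4,2}\cong\W_{3,5}$), so the unresolved odd case is exactly where your argument lands for small even twists. The difficulty you identify is real --- for odd $k$ a snake of even width drifts sideways, and naively alternating two widths braids everything into a single double ray --- but identifying it is not resolving it.

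The paper closes this case as follows: for $k,l$ odd and $\geq 5$ (smaller twists $l\in\{0,1,2,3\}$ are first removed with Lemma~\ref{cylinder isomorphism}, as in Lemma~\ref{cylinder double ray}), choose positive even $l_1,l_2,l_3$ with $l_1+l_2+l_3=l+1$. The first double ray $D_1$ is the concatenation, in alternating fashion along twisted edges, of the snakes $S_{il,l_1}$ and $S_{il-l_1,l_2}$, $i\in\Z$ --- the alternation of \emph{two} widths inside $D_1$ is precisely what absorbs the sideways drift so that $D_1$ closes up on itself instead of braiding into the second family --- while $D_2$ is the concatenation of the remaining snakes $S_{il-l_1-l_2,l_3}$, $i\in\Z$. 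These two double rays are disjoint and together span $\W_{k,l}$. Your proposal needs this (or an equivalent interleaving that you verify lines up with the twisted edges) to be a complete proof; as written, it only covers height $2$ and the even-height, large-twist case.
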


\begin{proof}
We can assume that $k\geq 3$, as otherwise the result is trivial for $k=2$. Moreover, we reduce the cases where $l=0,1,2$ to the next (general) cases by invoking~\Cref{cylinder isomorphism}, where as the case $k=l=3$ follows from~\Cref{fig:enter-label}.

If $k$ and $l$ are even $\geq 4$, let $l_1, l_2$ be positive even integers such that $l_1+l_2=l$. We let $D_1$ be the double ray obtained by the concatenation of all snakes $S_{il,l_1}$ and $D_2$ the double ray obtained by the concatenation of all snakes $S_{il+l_1+1,l_2}$, $i \in \Z$ along twisted edges as the two disjoint double rays that span a Hamiltonian circle of $\W_{k,l}$.

If $k$ and $l$ are odd $\geq 5$, let $l_1,l_2,l_3$ be positive even numbers such that $l_1+l_2+l_3=l+1$. As before, we define the double ray $D_1$ as the concatenation of all snakes $S_{il,l_1}$ and $S_{il-l_1,l_2}$, $i \in \Z$ in an alternating fashion along twisted edges and $D_2$ as the concatenation of all $S_{il-l_1-l_2,l_3}$, $i \in \Z$. The double rays $D_1$ and $D_2$ are then disjoint and span the whole graph.
\end{proof}

\section{Proof of the main Theorem}

Let us quickly remind the following basic fact.

\begin{lemma}[Modular law {\rm\cite[1.6.15]{scott}}]\label{modular}
Let $G$ be a group and let $H,K,L\le G$ be subgroups with $H\le L$.
Then
\[
L\cap HK = H(L\cap K).
\]
\end{lemma}

\begin{lemma}\label{genindex2}
Let $G=\langle S\rangle$ be a group with a subgroup $H$ of index $2$.
Assume $S$ is symmetric and $S\cap H=\emptyset$.
Then for any $s\in S$, the set $sS$ generates $H$.
\end{lemma}

\begin{proof}
Let $h \in H$ be an arbitrary element of $H$.
Then $h = x_1 \cdots x_t$, where each $x_i \in S$ for $i = 1, \ldots, t$.
We observe that $t$ must be even so that $h$ belongs to $H$.
This shows that $S^2$ generates $H$.
Finally, we prove that every element $s_i s_j$ of $S^2$ can be generated by $sS$ by noting that $s_i s_j = s_i s^{-1} s s_j$, and $s s_i^{-1} \in sS$.
This completes the proof.
\end{proof}
\begin{remark}\label{t}
    There is an element $a\in  D_\infty$ such that $[G:\langle a\rangle ]=2$ and moreover $\langle a,t\rangle =D_{\infty}$ for every $t\in D_{\infty}\sm \langle a \rangle $.
\end{remark}
We are ready to prove our main result. 
We will extensively use~\Cref{semidihedral} and~\Cref{ba} for the calculations throughout the proof, so we will mostly not specifically refer to them each time we use them. 

\begin{proof}[\textbf{Proof of~\Cref{main}}]
Let $G=A\ast_K B = \langle S\rangle$, where  $A$ and $B$ are generalized quasi-dihedral on $K$, $b$ and $K$, $b'$ and $A\cong B$.
It follows from \Cref{b2} that  $b^2=b'^2$.
Then $A=K\sqcup Kb$, $B=K\sqcup Kb'$ as in~\Cref{ba} and recall that $a=bb'$.

We apply induction on the number of generators in $S$.
For the base case that $|S|=2$, we immediately see that $\cay(G;S)$ is a double ray (and it is straightforward to show that $G\cong D_{\infty}$).

Now, let $|S|\geq 3$. We split the proof of the induction step into two cases.\\
{\textsc{Case 1.}}
We first assume that $S\cap K\langle a\rangle=\emptyset$. 
We claim the following:
\begin{claim}
There is an~$s\in S$ such that~$H\coloneqq\langle S' \rangle$ is infinite, where~$S':=S\sm \{s^{\pm 1}\}$.
\end{claim}
\begin{proof}
Assume to the contrary that $H$ is finite for every $S\sm \{s^{\pm 1}\}$, and so every element of $S$ is torsion.  
By~\Cref{order}, every generator in $S$ has the form $ka^ib$, where $k\in K$ and $i\in \Z$.
Notice that it can't be that $i=j$ for every $s_1=k_1a^ib, s_2=k_2a^jb \in S$, as otherwise $S$ does not generate $G$. 
Because $\langle s_1,s_2\rangle \subseteq K\langle a^ib\rangle $
Thus, there exist $s_1=ka^{i}b, s_2=k'a^{j}b\in S$, where $j\neq i$. Recall that $|S'| \geq 2$ and let $s \in S \setminus \{ s_1^{\pm 1}, s_2^{\pm 1}\}$.
Then $H\coloneqq\langle S' \rangle$ is infinite, where~$S':=S\sm \{s^{\pm 1}\}$, as $s_1s_2=k_1k_2^{-1}b^2a^{i-j} \in H \cap K\langle a \rangle$ has infinite order.
\end{proof}

Since $K\unlhd HK$, we deduce that $(H\cap K)\unlhd H$, hence $ H/(H\cap K)\cong HK/K\leq G/K\cong\Dinf$.
It follows from \Cref{semidihedral subgroup} that every subgroup of $D_{\infty}$ is isomorphic to either a cyclic group or $D_{\infty}$.
But since $H$ is infinite and $S\cap K\langle  a\rangle=\emptyset$, there are elements $s_1=k_1a^ib$ and $s_2=k_2a^jb$, where $i\neq j$ and so $s_1s_2\neq s_2s_1$.
Thus $H/(H\cap K)\cong D_{\infty}$.
It follows from \Cref{order} and \Cref{t} that there is an element $k_{\ell}a^{\ell}\in K\langle a \rangle $ such that 
\begin{equation}\label{[H:H']}
2=\left[\frac{H}{H\cap K}:\langle k_{\ell}a^{\ell}(H\cap K)\rangle \right]=[H:(H\cap K)\langle k_{\ell}a^{\ell}\rangle].
\end{equation}
We note that $S\cap K\langle a\rangle =\emptyset$.
By \Cref{t}, $\langle k_{\ell}a^{\ell}(H\cap K), t(H\cap K)\rangle$ is $H/(H\cap K)$ for some $t\in S'$.
Next it follows that $H=(H\cap K)\langle k_{\ell}a^{\ell},  t\rangle$, where $t\in S'$ and for some $k_{\ell}a^{\ell}\in H$.

We now state the following calculation, some steps of which we explain afterwards:
\begin{align}
 K\langle a \rangle=&\langle S',s\rangle \cap K\langle a\rangle\nonumber\\
 =&\langle H,s\rangle \cap K\langle a \rangle\nonumber\\
  =&\langle (H\cap K)\langle k_{\ell}a^{\ell},t\rangle,s\rangle \cap K\langle a\rangle\nonumber\\
  =&\left((H\cap K)\langle k_{\ell}a^{\ell}\rangle\langle t,s\rangle\right) \cap K\langle a\rangle \label{eq1}\\
    =&\left((H\cap K)\langle k_{\ell}a^{\ell}\rangle\right)\left(\langle t,s\rangle\cap K\langle a \rangle\right) \tag{by~\Cref{modular}}\\
        =&\left((H\cap K)\langle k_{\ell}a^{\ell}\rangle\right)\langle ts \rangle \label{eq2}
\end{align}

\noindent
Recall that $S\cap K\langle a\rangle =\emptyset$.
Let $s=k_ia^ib$ and $t=k_ja^jb$. 
For~(\Cref{eq1}), we first note 
that $(H\cap K) s=s (H\cap K)$, as $k\in H\cap K$ and so $ks=kk_ia^ib=k_ia^ibk^{-1}=sk^{-1}$.
In addition, we have
\[
 (k_{\ell}a^{\ell})s=k_{\ell}a^{\ell}k_ia^ib
 =k_ia^ik_{\ell}a^{\ell}b
  =k_ia^ibk_{\ell}^{-1}a^{-\ell}
  =s(k_{\ell}a^{\ell})^{-1},
\]
So $ks=sk^{-1}$ and $(k_{\ell}a^{\ell})s=s(k_{\ell}a^{\ell})^{-1}$. 
Similarly, one can show that $kt=tk^{-1}$and $(k_{\ell}a^{\ell})t=t(k_{\ell}a^{\ell})^{-1}$ as well. 
Thus $(H\cap K)\langle s,t\rangle=\langle s,t\rangle(H\cap K)$ and  $\langle k_{\ell}a^{\ell}\rangle\langle t,s\rangle=\langle t,s\rangle\langle k_{\ell}a^{\ell}\rangle $.
Hence $(H\cap K)\langle k_{\ell}a^{\ell}\rangle\langle t,s\rangle$ is a subgroup of $G$, which implies that
$$(H\cap K)\langle k_{\ell}a^{\ell}\rangle\langle t,s\rangle=\langle (H\cap K)\langle k_{\ell}a^{\ell},t\rangle,s\rangle.$$

\noindent Let~$H'\coloneqq (H\cap K)\langle k_{\ell}a^{\ell}\rangle$. 
We have proven so far that~$K\langle a \rangle=H'\left(\langle t,s\rangle\cap K\langle a \rangle\right)$,
so it remains to show~(\Cref{eq2}), or that $\langle t,s\rangle\cap K\langle a \rangle=\langle ts \rangle$.\\
It is not hard to see that $ \langle t,s\rangle K\langle a \rangle=G$, where $s=k_ia^ib$ and $t=k_ja^jb$.
By \Cref{[G:Ka]=2}, we know that $[G:K\langle a \rangle]=2$ and so $G=K\langle a\rangle \sqcup K\langle a \rangle s$.
Moreover, we have  $$2=[G:K\langle a \rangle]=[\langle t,s\rangle K\langle a \rangle:K\langle a \rangle]=[\langle t,s \rangle:\langle t,s\rangle \cap K\langle a \rangle].$$
We can now apply~\Cref{genindex2} and infer that $\{ts,ts^{-1}, t^2\}$ generates  $\langle t,s\rangle \cap K\langle a \rangle$.
\begin{claim}
The cosets $H'(ts)^{\ell}ts^{-1}$ and~$H'(ts)^{\ell}t^{2}$ can be expressed as~$H'(ts)^{\ell'}$, where~$\ell,\ell'\in \Z$ and $H'=(H\cap K)\langle k_{\ell}a^{\ell}\rangle$.
\end{claim}
\begin{proof}
Observe that $ts=ts^{-1}s^2=ts^{-1}b^2=ts^{-1}t^2$.
Moreover, notice that $t^2$, $ts$ and $ts^{-1}$ lie in $K \langle a \rangle$, hence they pairwise commute.
Since $t^2\in H\cap K\subseteq H'$, we deduce that
\[
H'(ts)^{\ell}t^2=H't^2(ts)^{\ell}=H'(ts)^{\ell}
\]
and
\[
    H'(ts)^{\ell}ts^{-1}=H't^2(ts)^{\ell}ts^{-1}= H' (ts)^{\ell}ts^{-1}t^2=H' (ts)^{\ell+1}.
    \]
This proves the claim.
\end{proof}
Thus, we have shown that every coset of $H'$ in $K\langle a\rangle$ is of the form $H'(ts)^{\ell}$.
Next, we claim the following:
\begin{claim}
    $H'$ has finite index in $K\langle a\rangle$.
\end{claim}
\begin{proof}
    We note that $ts=k_ja^jbk_ia^ib=ka^{j-i}$ for some $k\in K$.
    In addition we already know that $H'=(H\cap K)\langle k_{\ell}a^{\ell}\rangle$.
    Since $(ts)^n=k^na^{(j-i)n}$, we deduce that there is $n\in \N$ such that $(ts)^n\in H'$ which shows that the index is finite.
\end{proof}
Therefore we have
\[
K\langle a\rangle=H'\sqcup H'(ts)\sqcup\cdots \sqcup H'(ts)^m.
\]
Recall that $G=K\langle a\rangle \sqcup K\langle a \rangle s$.
As a result, the cosets of $H'$ partition $G$ as follows:
\begin{align}
 G=&H'\sqcup H'(ts)\sqcup\cdots \sqcup H'(ts)^m\sqcup\\
 &\phantom{{}=1}H't\sqcup H'(ts)t\sqcup\cdots \sqcup H'(ts)^mt.\nonumber
 \end{align}

\begin{figure}[!h]\label{twistedcylinder}

\begin{center}
\begin{tikzpicture}[scale=0.7]
\foreach \i in {1,2,...,8}{
\draw[thick] (\i,0)--(\i,1);
\draw[thick,blue] (\i,1)--(\i,2.5);
\draw[thick,blue] (\i,3.5)--(\i,5);
\draw[thick] (\i,2.5)--(\i,3.5);
\draw[thick] (\i,5)--(\i,6);
}
\foreach \i in {1,2,...,7}{
\draw[thick] (\i,1)--(\i+1,0);
\draw[thick] (\i,3.5)--(\i+1,2.5);
\draw[thick] (\i,6)--(\i+1,5);
}
\foreach \i in {1,2,...,8}{
\draw (\i,1) node [circle,fill, inner sep=2pt] {};
\draw (\i,0) node [circle,fill, inner sep=2pt] {};
\draw (\i,2.5) node [circle,fill, inner sep=2pt] {};
\draw (\i,3.5) node [circle,fill, inner sep=2pt] {};
\draw (\i,5) node [circle,fill, inner sep=2pt] {};
\draw (\i,6) node [circle,fill, inner sep=2pt] {};
}
\draw (9,0) node {$H'$};
\draw (9,1) node {$H't$};
\draw (9,2.5) node {$H'ts$};
\draw (9,3.5) node {$H'tst$};
\draw (9,5) node {$H'tsts$};
\draw (9,6) node {$H'tstst$};

\begin{scope}[shift={(0.5,0.5)}]
\end{scope}
\end{tikzpicture}
\end{center}
\caption{The partition of $G$ according to the cosets of $H'$.}
\end{figure}
 
Finally, we need the following observation. It is an easy induction on $\ell$ to see that for every  $1\leq \ell \leq m$ and $t_1,\ldots,t_{\ell} \in S'$ (not necessarily distinct) we have that
\begin{align}
    H'(t_1s)(t_2s)\ldots (t_{\ell}s)=&H'(ts)^{\ell}, \label{ts}\\
    H'(t_1s)(t_2s)\ldots (t_{\ell-1}s)t_{\ell}=&H'(ts)^{\ell-1}t \label{tst}.
\end{align}

We are ready to extract a spanning twisted cubic cylinder from $\cay(G;S)$.
By \Cref{[H:H']} we know that $[H:H']=2$. Now, since $t\notin H'$, we conclude that  $H=H' \cup H't$ has a spanning double ray $R_0$, which must necessarily alternate between the two cosets of $H'$ in $H$ by the fact that $S' \subseteq H't$.  Split $R$ into subpaths of length two between vertices of $H't$ and let $P=\{g,gt_1,gt_1t_2\}$ be an arbitrary such subpath, where $g \in H't$. By~\Cref{6-cycle}, there is a $6$-cycle $C=\{g,gt_1,gt_1t_2,gt_1t_2s,gst_2,gs\}$. In other words, $C$ is obtained by connecting the ends of $P$ and $P'=\{gst_1t_2,gst_2,gs\}$ with two edges labeled with $s$. Clearly, $gt_1t_2s,gs \in H't$ and by~(\Cref{tst}) we have that $gst_2\in H'tst$. It follows that the concatenation of all such paths $P'$ of length two gives rise to a spanning double ray $R_1$ of $H'ts \cup H'tst $ alternating between vertices of $H'ts$ and $H'tst$, where the vertices of $H't$ in $R_1$ are connected by an $s$-edge to the vertices of $H'ts$ in $R_2$. 

Using exactly the same method, we obtain inductively for every $\ell \in  \Z_{m+1}$ that $H'(ts)^{\ell} \cup H'(ts)^{\ell}t$ has a spanning double ray $R_{\ell}$ alternating between  $H'(ts)^{\ell}$ and $H'(ts)^{\ell}t$, where the vertices of $H'(ts)^{\ell}t$ in $R_{\ell}$ are connected by an $s$-edge to the vertices of $H'(ts)^{\ell+1}$ in $R_{\ell+1}$. 
\Cref{6-cycle} ascertains that this gives rise to a twisted cubic cylinder with rows $R_0,R_1,\ldots,R_m$.
We conclude by~\Cref{cylinder double ray} and ~\Cref{cylinder circle} that $\cay(G;S)$ contains a Hamiltonian double ray.\\

{\textsc{Case 2.}}
Suppose that $S\cap K\langle a \rangle \neq \emptyset$. Let $S_1:=S\sm K\langle a \rangle$ (notice that $S_1\neq \emptyset$) and $S_2:=S\sm S_1$.
Let $H'$ be the subgroup generated by $S_2$.\\
{\bf Subcase i:} If $H'$ is finite, then $H'\subseteq K$.
We note that $G/H'\cong A/H'\ast_{K/H'}A/H'$ and so $G/H'$ is generalized quasi-dihedral.
By the induction hypothesis, the Cayley graph of $G/H'$ with respect with $S_1H'$ has a Hamiltonian double ray $\mathcal R$
\[
\ldots,H'x_{-2},H'x_{-1},H',H'x_1,H'x_2,\ldots,
\]
where $x_i\in S_1$ for each $i\in \mathbb Z\sm\{0\}$.
On the other hand, $\cay(H',S_2)$ has a Hamiltonian path.
Since by~\Cref{4-cycle} we have 
$xk=k^{-1}x$ and $kx=xk^{-1}$ for every $x\in S_1$ and $k\in K$, we invoke~\Cref{spanning grid} and conclude that $G$ has a Hamiltonian double ray.\\
{\bf Subcase ii:} If $H'$ is infinite, then $[G:H']$ is finite.
We note that $H'$ is an abelian group and so it follows from \cite[Theorem 1] {nash1959abelian} that $\cay(H',S_2)$ contains a Hamiltonian double ray $R$.
By~\Cref{normal}, we know that $H'$ is normal. Moreover, $S_1H'$ generates the quotient $G/H'$.
On the other hand, we know by~\Cref{ba} that $\langle KH', aH'\rangle=\langle K,a \rangle / H'=K \langle a \rangle / H'$ is an abelian subgroup of $G/H'$.
Furthermore, we have that $[G/H':K\langle a \rangle / H']=[G:K \langle a \rangle]=2$.

Since $bH'gH'=g^{-1}H'bH'$ for every $g \in K\langle a \rangle / H'$ and $[G:H']$ is finite, we deduce that the quotient $G/H'$ is a generalized quasi-dihedral group on $K\langle a \rangle / H'$ and $bH'$.

It follows from~\Cref{semiham} that $\cay(G/H',S_1H')$ has a Hamiltonian path
$$g_1H'=H',g_2 H',\ldots,g_nH'.$$
Observe that $g_iH'$ contains a Hamiltonian double ray $g_iR$ for every $1\leq i \leq n$ in $\cay(G;S)$. 
Moreover, the vertices of $g_iR$ are connected to the vertices of $g_{i+1}R=(g_iR)s_i$ with a perfect matching whose edges are labeled with the same generator $s_i \in S_1$. 
In combination with~\Cref{4-cycle}, this implies that conditions (ii) and (iii) of~\Cref{spanning grid} hold and we are done.
\end{proof}

\begin{section}{Final Remarks}

In the final section, we illustrate different approaches to continue the study of Hamiltonicity of generalized quasi-dihedral groups.

\subsection{Hamiltonian circles}
While there have been many attempts to extend the definition of hamiltonian cycles for infinite graphs, there is no single method that generalizes all theorems of finite Hamiltonicity to locally finite graphs.

Here we follow the topolgical approach introduced in \cite{RDsBanffSurveyII,RDsBanffSurveyI,diestelgraph}.
More specifically, the infinite cycles of a graph $G$ are defined as the circles(homeomorphic image of $S^1$) in the Freudenthal compactification $|G|$, where $|G|$ denotes the graph $G$ endowed by $1$-complex topology. 
A homeomorphic image of $[0,1]$ in $|G|$ is an \emph{arc} in $G$.
A circle in $|G|$ is a \emph{Hamiltonian circle} if it contains all vertices of $G$.
When $G$ is two-ended, there is a nice combinatorial description for Hamiltonian circles, see the following lemma.
\begin{lemma}{\rm\cite[Theorem 2.5]{RDsBanffSurveyI}}\label{What is HC}
If $\G$ is a locally finite, two-ended graph and $C$ is a disjoint union of two double-rays
which together span $\G$, each of which contains a ray to both ends of the graph, then
$C$ is a Hamiltonian circle.
\end{lemma}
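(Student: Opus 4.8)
The plan is to unwind the definition of the Freudenthal compactification $|\G|$ and show directly that the closure $\overline{C}$ of $C$ in $|\G|$ is a circle that passes through every vertex. First I would record the one structural input: since $\G$ is two-ended it has exactly two ends, which I call $\omega_1$ and $\omega_2$; every ray of $\G$ belongs to one of them.

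Next I would analyse a single one of the two double rays, say $D\in\{D_1,D_2\}$. Choosing a vertex $v\in D$ splits $D$ into two rays $D^{+}$ and $D^{-}$ starting at $v$, and every ray contained in $D$ is a tail of $D$, hence equivalent either to $D^{+}$ or to $D^{-}$. The hypothesis that $D$ contains a ray to each end, together with $\omega_1\neq\omega_2$, therefore forces (after possibly interchanging $D^{+}$ and $D^{-}$) that $D^{+}\in\omega_1$ and $D^{-}\in\omega_2$. I would then invoke the standard fact that a ray belonging to an end $\omega$ converges to $\omega$ in $|\G|$, its tails eventually entering every basic neighbourhood $\hat C(S,\omega)$. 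Consequently $\overline{D^{+}}=D^{+}\cup\{\omega_1\}$ and $\overline{D^{-}}=D^{-}\cup\{\omega_2\}$ are each homeomorphic to $[0,1]$, and since they meet only in $v$, the set $\overline{D}=D\cup\{\omega_1,\omega_2\}$ is an arc in $|\G|$ with endpoints $\omega_1$ and $\omega_2$.

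Then I would assemble the two arcs. An interior point of an edge of $\G$ that does not lie in $C$ has a small neighbourhood inside that edge missing $V(\G)$ and hence missing $C$, so it is not in $\overline{C}$; the only ends of $\G$ are $\omega_1$ and $\omega_2$, and both already lie in $\overline{D_1}\subseteq\overline{C}$. Hence $\overline{C}=C\cup\{\omega_1,\omega_2\}=\overline{D_1}\cup\overline{D_2}$. Because $D_1$ and $D_2$ are vertex-disjoint and $\G$ has no ends beyond $\omega_1,\omega_2$, the arcs $\overline{D_1}$ and $\overline{D_2}$ share no point other than the common endpoints $\omega_1,\omega_2$; the union of two arcs meeting exactly in their two endpoints is homeomorphic to $S^1$, so $\overline{C}$ is a circle in $|\G|$. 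Finally $V(D_1)\cupdot V(D_2)=V(\G)$, so this circle meets every vertex, and therefore $C$ is a Hamiltonian circle.

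The only genuinely delicate part is the bookkeeping with the topology of $|\G|$, and this is where the hypotheses are spent: (a) requiring a ray to each end rules out a double ray with both tails converging to the same end (which would already close up into a circle on its own), (b) two-endedness guarantees that the closure of such a double ray adds precisely $\omega_1$ and $\omega_2$ and nothing else, and (c) the disjointness of $D_1$ and $D_2$ together with two-endedness is exactly what prevents $\overline{C}$ from being, say, a theta-graph rather than $S^1$. None of this involves computation, but each point should be stated explicitly.
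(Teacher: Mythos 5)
This lemma is not proved in the paper at all: it is quoted verbatim from the cited survey (Theorem~2.5 of that reference) and closed with a \qed, so there is no internal argument to compare yours against. On its own merits, your proof is correct and is essentially the standard argument one would write out: split each double ray $D_i$ at a vertex into two rays, use the hypothesis that $D_i$ meets both ends to conclude one subray lies in $\omega_1$ and the other in $\omega_2$, take closures to get two arcs from $\omega_1$ to $\omega_2$, and observe that two arcs meeting exactly in their endpoints form a circle through all vertices. Two small points deserve an explicit word if you write this up. First, the step ``$\overline{D^{+}}=D^{+}\cup\{\omega_1\}$ is homeomorphic to $[0,1]$'' is true but not purely formal: the natural bijection from $[0,1]$ is continuous, and one upgrades it to a homeomorphism because $\overline{D^{+}}$ is a closed subset of the compact Hausdorff space $|\G|$ (equivalently, cite the standard fact that the closure of a ray is an arc ending in its end). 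Second, when you claim $\overline{C}=C\cup\{\omega_1,\omega_2\}$ you should note that local finiteness is what prevents any vertex from being an accumulation point of $C$, so the closure can only add ends; you implicitly use this, and it is exactly where local finiteness enters. With those remarks made explicit, your argument is a complete, self-contained proof of a statement the paper only cites.
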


By the proof of~\Cref{main} and~\Cref{spanning grid} as well as~\Cref{cylinder circle}, we have the following corollary.

\begin{coro}
Let $G=\langle S\rangle$ be a two-ended generalized quasi-dihedral group.
If $\cay(G;S)$ has degree at least three, it contains a Hamiltonian circle.
\end{coro}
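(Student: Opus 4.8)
The plan is to re-run the proof of Theorem~\ref{main} and check that, under the extra hypothesis, it already delivers a Hamiltonian circle rather than merely a Hamiltonian double ray. The degree hypothesis is used only to rule out the base case $|S|=2$ of the induction, in which $\cay(G,S)$ is a single double ray: its Freudenthal compactification is an arc, so it contains no circle at all. For $|S|\ge 3$ the induction step of the proof of Theorem~\ref{main} exhibits, inside $\cay(G,S)$, a spanning subgraph which is either a two-ended infinite grid $\mathcal G_n$ with $n\ge 2$ (in Case~II, where $n=|\langle S_2\rangle|$ in the first subcase and $n=[G:\langle S_2\rangle]$ in the second), or a twisted cubic cylinder $\W_{m+1,l}$ (in Case~I).

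In the grid case I would simply quote Lemma~\ref{spanning grid}, which already guarantees a Hamiltonian circle once $n\ge 2$. In the cylinder case, provided $m+1\ge 2$, I would invoke Lemma~\ref{cylinder circle} to obtain two disjoint double rays $D_1,D_2$ spanning $\W_{m+1,l}$. Since each $D_i$ is produced by concatenating snakes (or staircases) along the $\Z$-direction, each of $D_1,D_2$ contains a ray into both ends of the two-ended graph $\W_{m+1,l}$, hence of $\cay(G,S)$; so Lemma~\ref{What is HC} upgrades $D_1\cup D_2$ to a Hamiltonian circle of $\cay(G,S)$.

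The step I expect to be the main obstacle is the degenerate possibility $m=0$ in Case~I, which occurs when the suppressed generator $s$ is such that $\langle S\setminus\{s,s^{-1}\}\rangle$ fails to be a proper subgroup of $G$: then $H'=K\langle a\rangle$ and the ``cylinder'' collapses to the single row $R_0$ together with the $s$-edges. Here I would argue that the spanning double ray $R_0$ of $G$ alternates between the cosets $H'=K\langle a\rangle$ and $H't$, that the $s$-edges form a perfect matching between these two classes — hence a perfect matching between vertices at a fixed odd offset $l$ along $R_0$, precisely as the twisted edges of $\W_{1,l}$ — and that $l\neq\pm1$, since $s$ is not one of the remaining generators and so no parallel edges arise; thus $|l|\ge 3$. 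Then Lemma~\ref{cylinder isomorphism} (after reflecting $R_0$ if needed) identifies $\W_{1,l}$ with a twisted cubic cylinder of height $\tfrac{1+|l|}{2}\ge 2$, to which Lemma~\ref{cylinder circle} applies as before. Assembling the cases, $\cay(G,S)$ contains a disjoint union of two spanning double rays, each meeting both ends, which by Lemma~\ref{What is HC} is the desired Hamiltonian circle.
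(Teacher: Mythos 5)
Your main-line argument is exactly the paper's intended proof: the paper justifies this corollary with a single sentence citing the proof of Theorem~\ref{main} together with Lemma~\ref{spanning grid} and Lemma~\ref{cylinder circle} (with Lemma~\ref{What is HC} implicit), and your handling of the grid case, of the generic cylinder case, and of the role of the degree hypothesis in excluding the double-ray base case all match that. You have also correctly put your finger on the point the paper glosses over: in Case~I nothing forces $H=\langle S\setminus\{s,s^{-1}\}\rangle$ to be a \emph{proper} subgroup (for instance $G=D_\infty$ with $S=\{b,ab,a^2b,a^3b\}$, where deleting any single generator still leaves a generating set of $G$), and then $m=0$, the ``cylinder'' degenerates to the single row $R_0$ plus $s$-edges, and Lemma~\ref{cylinder circle}, which requires height $k\geq 2$, no longer applies. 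This is harmless for the double-ray statement but is a genuine issue for the circle.

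However, your patch for this degenerate case has a gap. You assert that the $s$-edges match vertices at a \emph{fixed} odd offset $l$ along $R_0$, ``precisely as the twisted edges of $\W_{1,l}$''. There is no justification for this: $R_0$ is an arbitrary Hamiltonian double ray of $\cay(G,S')$ supplied by the induction hypothesis, and right multiplication by $s$ need not act as a constant shift with respect to that uncontrolled ordering; the offset is constant only in the special situation where $\cay(G,S')$ is the standard double ray on two involutions. Moreover, Lemma~\ref{cylinder isomorphism} is stated (and its staircase construction makes sense) only for $k\geq 2$, and for $k=1$ the produced twist $\frac{3k-l}{2}=\frac{3-l}{2}$ is nonpositive once $l\geq 3$, so even granting the constant offset you would be invoking that lemma outside its hypotheses. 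A cleaner repair is to strengthen the induction to the Hamiltonian-circle statement itself: when $H=G$ and $\cay(G,S')$ still has degree at least $3$, the induction hypothesis gives two disjoint spanning double rays of the spanning subgraph $\cay(G,S')$, each reaching both ends, so Lemma~\ref{What is HC} applies in $\cay(G,S)$ as well; the only remaining situation is $\cay(G,S')$ of degree $2$, where $R_0$ really is the standard double ray, the chord structure genuinely is shift-invariant, and the resulting graph can be analysed directly rather than through Lemma~\ref{cylinder isomorphism}.
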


We believe that the same statements of Hamiltonicity go through when $G$ is a one-ended generalized quasi-dihedral group, too. We note that in that case the subrgroup $H$ of $G$ of index $2$ is an abelian one-ended group, hence $H$ is isomorphic to $\mathbb Z^k$ for some $k\geq 2$.

\begin{conj}
Let $G=\langle S\rangle$ be a one-ended generalized quasi-dihedral group.
Then $\cay(G;S)$ contains a Hamiltonian double ray(Hamiltonian circle). 
\end{conj}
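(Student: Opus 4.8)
\emph{The plan} is to run an induction on $|S|$ that mirrors, one dimension higher, the proof of Theorem~\ref{main}. Write $G$ as a generalized quasi-dihedral group on its abelian index-$2$ subgroup $H$ and an element $b$ of finite order, so $b$ inverts $H$; since $G$ is one-ended and finitely generated, $H$ is a one-ended finitely generated abelian group, hence $H\cong\Z^k\oplus F$ with $k\geq 2$ and $F$ finite. Split $S=S_1\sqcup S_2$ with $S_2:=S\cap H$ (the ``rotations'') and $S_1:=S\setminus H\neq\emptyset$ (the ``reflections''), noting $S_1\neq\emptyset$ because $G$ is nonabelian. The base case $|S|=2$ is vacuous: by Reidemeister--Schreier and the relations $(hb)^2=b^2$, $b^4=1$ (Lemma~\ref{b4=1}), two generators force the rank of the index-$2$ abelian subgroup to be at most $1$, contradicting $k\geq 2$. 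The engine is a one-ended variant of Lemma~\ref{spanning grid}: if $V(\cay(G,S))$ decomposes into layers, each carrying a spanning subgraph isomorphic to a fixed locally finite graph $M$, consecutive layers joined by the matching induced by that isomorphism, then $\cay(G,S)$ contains a spanning $P_n\Box M$ (finitely many layers) or $D\Box M$ (infinitely many). Taking $M$ to contain a spanning double ray $R$, we get a spanning $P_n\Box R=\mathcal G_n$ or a spanning $D\Box R$, and both the finite-height grid $\mathcal G_n$ and the planar grid $D\Box D\cong\cay(\Z^2,\{\pm e_1,\pm e_2\})$ contain a Hamiltonian double ray (the latter by \cite{nash1959abelian}); the Hamiltonian circle then follows from Lemma~\ref{What is HC} whenever the degrees are at least three, granted the corresponding ``two disjoint spanning double rays'' statement for the extracted grid.

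\emph{Case $S_2\neq\emptyset$.} Put $H':=\langle S_2\rangle\leq H$; this is abelian, $H'\trianglelefteq G$ by Lemma~\ref{b4=1}(iii), and $S_1H'$ generates $G/H'$ with fewer than $|S|$ elements. If $H'$ is finite, then $G/H'$ is again one-ended generalized quasi-dihedral on $H/H'$, so $\cay(G/H',S_1H')$ has a Hamiltonian double ray by the inductive hypothesis while $\cay(H',S_2)$ has a Hamiltonian path since $H'$ is finite abelian. If $H'$ is infinite with $[G:H']<\infty$, then $H'\cong\Z^k$ is one-ended abelian, $\cay(H',S_2)$ has a Hamiltonian double ray by \cite{nash1959abelian}, and $G/H'$ is a finite generalized quasi-dihedral group, so $\cay(G/H',S_1H')$ has a Hamiltonian path by Corollary~\ref{semiham}. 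If $H'$ is infinite with $[G:H']=\infty$, then $\cay(H',S_2)$ has a Hamiltonian double ray by \cite{nash1959abelian}, and $G/H'$ is a one- or two-ended generalized quasi-dihedral group on $H/H'$, so $\cay(G/H',S_1H')$ has a Hamiltonian double ray by the inductive hypothesis respectively by Theorem~\ref{main}. In every case the $4$-cycles of Theorem~\ref{4-cycle} ensure that an $s$-edge ($s\in S_1$) between two cosets of $H'$ carries rotation-edges to rotation-edges, so the one-ended grid lemma applies and $\cay(G,S)$ contains a Hamiltonian double ray (and circle). This parallels Case~II of Theorem~\ref{main}.

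\emph{Case $S_2=\emptyset$.} This is the analogue of Case~I and where the real work lies. As there, choose $s\in S$ and set $H_0:=\langle S\setminus\{s,s^{-1}\}\rangle$; products of two reflections lie in $H$, and by Lemma~\ref{genindex2} the elements $\{s^{-1}s':s'\in S\}$ generate $H$. A pigeonhole argument lets one choose $s$ so that $H_0$ is infinite: otherwise every pairwise product of reflections would be torsion, forcing the finitely generated abelian group $H$ to be finite. Since $H_0$ contains a reflection, Theorem~\ref{semidihedral subgroup} shows $H_0$ is then a one- or two-ended generalized quasi-dihedral group, so $\cay(H_0,S\setminus\{s,s^{-1}\})$ has a Hamiltonian double ray by the inductive hypothesis respectively by Theorem~\ref{main}; this is the bottom row $R_0$. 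Using the $6$-cycles of Theorem~\ref{6-cycle} exactly as in the proof of Theorem~\ref{main}, one attaches further rows along $s$-edges, the rows being indexed by the cosets of the index-$2$ abelian part $H_0\cap H$ of $H_0$ in $H$. If $[H:H_0\cap H]<\infty$ we obtain a spanning twisted cubic cylinder of finite height and finish by Lemmas~\ref{cylinder double ray} and~\ref{cylinder circle}; if $[H:H_0\cap H]=\infty$ we instead obtain a spanning $D\Box R_0$, to which the one-ended grid lemma applies.

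\emph{The main obstacle} is precisely the dichotomy in the last step. In the two-ended setting of Theorem~\ref{main} the index-$2$ abelian part of $H_0$ automatically has finite index in $H$, because every infinite cyclic subgroup of $\Z\oplus(\text{finite})$ does; for $H\cong\Z^k$ with $k\geq2$ this fails and $[H:H_0\cap H]$ can be infinite already for $\dih(\Z^2)$ (where, for natural generating sets, $\cay(G,S)$ is essentially the hexagonal lattice, a one-ended cubic graph whose Hamiltonicity is not covered by Lemmas~\ref{cylinder double ray}--\ref{cylinder circle}). I expect the way around this is a secondary induction on $\operatorname{rank}(H)$, together with an infinite-height analogue of the twisted-cylinder Hamiltonicity lemmas covering the $2$-dimensional lattice-like spanning subgraphs (hexagonal lattices, and more generally double-ray-indexed families of double rays) that arise; the delicate point, familiar from the two-ended proof, is that the normal subgroup one wants to quotient or stack by must be realized \emph{within} $S$ so that its cosets are genuine double rays of $\cay(G,S)$. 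Making this precise is the technical heart of the argument, and is also what is needed to promote the Hamiltonian double ray to a Hamiltonian circle whenever $\cay(G,S)$ has degree at least three.
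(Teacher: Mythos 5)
The statement you are proving is stated in the paper only as a conjecture; the authors give no proof of it, so your argument has to stand on its own, and as written it does not. You yourself flag the decisive gap: in the case $S\cap H=\emptyset$, after choosing $s$ so that $H_0=\langle S\setminus\{s,s^{-1}\}\rangle$ is infinite and building rows via the $6$-cycles of Theorem~\ref{6-cycle}, the index $[H:H_0\cap H]$ can be infinite (already for $\dih(\Z^2)$ with three reflection generators, where the Cayley graph is essentially the hexagonal lattice). In that situation the extracted spanning subgraph is not a twisted cubic cylinder of finite height but an infinite-height wall-like graph, and none of the Hamiltonicity lemmas available in the paper (Lemma~\ref{spanning grid}, Lemma~\ref{cylinder double ray}, Lemma~\ref{cylinder circle}) applies; the "secondary induction on $\operatorname{rank}(H)$ plus an infinite-height analogue of the cylinder lemmas" that you invoke is exactly the missing content, not a routine adaptation. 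Since this is the minimal-degree, hardest configuration, the proof cannot be considered complete without it.

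There are also smaller unproved ingredients that you assert rather than establish. The "one-ended variant of Lemma~\ref{spanning grid}" with infinitely many layers (yielding a spanning $D\Box D$ and then a Hamiltonian double ray, and two disjoint spanning double rays for the circle version) is plausible but needs a proof, including the verification of the matching-consistency condition (iii) across infinitely many cosets; likewise the claim that the finite-index subcase of $S\cap H=\emptyset$ really produces a twisted cylinder (with a well-defined twist) requires redoing the coset computation of Case~I of Theorem~\ref{main} with $H\cong\Z^k\oplus F$, $k\geq 2$, in place of $K\langle a\rangle$, which you do not carry out. The portions that do mirror the paper faithfully (the case split on $S\cap H$, the use of Lemma~\ref{b4=1}, Corollary~\ref{semiham}, Theorem~\ref{4-cycle}, Theorem~\ref{6-cycle}, Lemma~\ref{genindex2}, and Nash-Williams for the abelian layers, together with the pigeonhole choice of $s$) are sound in outline, but the conjecture remains open after your argument because its technical heart --- Hamiltonicity of the two-dimensional, hexagonal-lattice-like spanning structures --- is left unresolved.
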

In~\Cref{equivalence} we proved the following.
Let $G=A\ast_K B$, where $A=K\sqcup Kb$ and $B=K\sqcup Kb'$ such that $b^2=b'^2$.
Then if $A$ and $B$ are generalized quasi-dihedral on $K$, so is $G$.

\begin{prob}
Can we drop the condition `` $b^2=b'^2$" in~\Cref{equivalence}?
\end{prob}

\subsection{Hamilton-connectivity}

A finite graph is called \emph{Hamilton-connected} if there is a Hamiltonian path between any pair of vertices of the graph. A finite bipartite graph is called \emph{Hamilton-laceable} if there is Hamiltonian path between pair of vertices at odd distance. Note that it is an easy observation that Hamilton-laceability (and not -connectivity) is the most that one can hope for in the case of finite bipartite graphs. The most celebrated theorem relating Cayley graphs and Hamilton-connectivity is due to Chen and Quimpo~\cite{chen1981strongly}, who proved something stronger about the Hamiltonicity of Cayley graphs of finite abelian groups with degree at least three; they are actually Hamilton-connected, unless they are bipartite when they are Hamilton-laceable.

Alspach et al.~prove in~\cite{alspach2010hamilton} extended this for Cayley graphs of finite generalized dihedral groups with degree at least three. 
Their proof also relies on extracting spanning finite grids(finite versions of infinite two-ended grids) or twisted cubic cylinders ---defined as \emph{honeycomb toroidal graphs} in ~\cite{alspach2010hamilton}---, where the double rays of the rows are replaced with finite cycles. A large part of their paper revolves around proving that these finite graphs are Hamilton-connected or -laceable.

Notice that from the definition of Hamiltonian connectivity we immediately obtain a Hamiltonian cycle when the two endvertices are connected with an edge. Using this, let us generalize the notion of Hamilton-connectivity to the infinite setting. We say an infinite (bipartite) graph is \emph{Hamiltonian ray-connected (-laceable)} if for any pair of vertices $u,v$ (at odd distance) there are two disjoint rays starting from $u,v$ spanning all the vertices of the graph. Similarly, an infinite (bipartite) graph is \emph{Hamiltonian arc-connected (-laceable)} if for any pair of vertices $u,v$ (at odd distance) there are two rays starting from $u,v$ and a double ray, all pairwise disjoint, whose union spans all the vertices of the graph. Observe that when $u$ and $v$ are connected with an edge in both definitions, we directly obtain a Hamiltonian double ray and a Hamiltonian circle, respectively.

The fact that bipartite graphs can only be Hamilton-laceable fails for infinite graphs. It is very easy (but a bit tedious as well) to prove that two-ended infinite grids are not just Hamilton-laceable, but Hamilton-connected.
\begin{prob}\label{Hamiltonian connectivity grid}
Is $\cay(G;S)$ both Hamiltonian ray- and arc-connected, where $G$ is a generalized quasi-dihedral group?
\end{prob}

\subsection{Decomposition and Uniqueness}
Alspach, in  {\cite[Unsolved Problem~4.5, p.~454]{alspach1985}}] conjectured the following.
Let $G$ be an Abelian group with a symmetric generating set~$S$.
If the Cayley graph\/ $\cay(G, S)$ has no loops, then it can be decomposed into Hamiltonian cycles \textup(plus a $1$-factor if the valency is odd\textup) and see \cite{arc0disjoint}, for a directed version of this problem,
Erde and Lehner \cite{ErdeLehner} showed every 4-regular Cayley graph of an infinite abelian group all of
whose finite cuts are even can be decomposed into Hamiltonian double rays, and so characterized when such decompositions exist.
This raises the following question about two-ended generalized quasi-dihedral groups.

\begin{prob}
Can every $4$-regular Cayley graph of a two-ended generalized quasi-dihedral group be decomposed into two hamiltonian double-rays(Hamiltonian circle)?
\end{prob}

Finally, consider $D_{\infty}=\langle a,b\mid b^2=(ba)^2=1\rangle $.
The Cayley graph of $D_{\infty}$ with respect to $S=\{ba,b,ab\}$ contains a unique Hamiltonian circle.
Naturally, we ask the following question.

\begin{prob}
Let $G$ be a generalized quasi-dihedral group.
For which generating $S$ of $G$ does $\cay(G;S)$ contain a unique Hamiltonian circle? 
\end{prob}

The above question has been answered \cite{MR4828039}


\end{section}

\noindent {\bf{Data availability}}\\
\noindent There are no associated data with this manuscript. This is a mathematical paper. No data were gathered, analysed, or used in any manner.
	\bibliographystyle{plain}
	\bibliography{collective.bib} 

@misc{arc0disjoint,
      title={ARC-DISJOINT HAMILTONIAN PATHS IN {C}ARTESIAN PRODUCTS OF
DIRECTED CYCLES}, 
      author={Darijani, Iren and Miraftab, Babak and  Witte Morris, Dave},
      year={2022},
      eprint={2203.11017},
      archivePrefix={arXiv},
      primaryClass={math.CO},
      note={\url{https://arxiv.org/pdf/2203.11017} (preprint)}
}

@misc{ErdeLehner,
      title={Hamiltonian decompositions of 4-regular {C}ayley graphs of infinite abelian groups}, 
      author={Erde, Joshua and Lehner, Florian},
      year={2020},
      eprint={2006.09759},
      archivePrefix={arXiv},
      primaryClass={math.CO},
      note={\url{https://arxiv.org/abs/2006.09759} (preprint)}
}

@book {alspach1985,
TITLE = {Cycles in Graphs},
    EDITOR = {Alspach, Brian and Godsil, Chris},
 PUBLISHER = {North-Holland Publishing Co., Amsterdam},
      YEAR = {1985},
     PAGES = {ix+467},
      ISBN = {0-444-87803-3},
   MRCLASS = {05-06},
  MRNUMBER = {821501},
       note = {\url{https://www.elsevier.com/books/cycles-in-graphs/alspach/978-0-444-87803-8}}
}

@article{miraftab2019splitting,
    AUTHOR = {Miraftab, Babak and Stavropoulos, Konstantinos},
     TITLE = {Splitting groups with cubic {C}ayley graphs of connectivity
              two},
   JOURNAL = {Algebr. Comb.},
  FJOURNAL = {Algebraic Combinatorics},
    VOLUME = {4},
      YEAR = {2021},
    NUMBER = {6},
     PAGES = {971--987},
   MRCLASS = {05C25 (20E06)},
  MRNUMBER = {4357782},
       DOI = {10.5802/alco},
       note = {\url{https://doi.org/10.5802/alco}}
}

@article{miraftab2018two,
title={Two-ended quasi-transitive graphs},
  author={Miraftab, Babak and R{\"u}hmann, Tim},
  journal={Discrete Mathematics, Algorithms and Applications},
  year={2021, in press},
  note={\url{https://doi.org/10.1142/S1793830922500239}}
}

@article{HC1,
AUTHOR = {Miraftab, Babak and R\"{u}hmann, Tim},
     TITLE = {Hamilton circles in {C}ayley graphs},
   JOURNAL = {Electron. J. Combin.},
  FJOURNAL = {Electronic Journal of Combinatorics},
    VOLUME = {25},
      YEAR = {2018},
    NUMBER = {2},
     PAGES = {Paper No. 2.5, 17},
   MRCLASS = {05C63 (05C25 05C45 20E06 20F05)},
  MRNUMBER = {3799423},
MRREVIEWER = {Primo\v{z} \v{S}parl},
       DOI = {10.37236/7009},
       note = {\url{https://doi.org/10.37236/7009}}
}

@article{miraftab2017cycles,
  title={From cycles to circles in {C}ayley graphs},
  author={Miraftab, Babak and R{\"u}hmann, Tim},
  journal={arXiv preprint arXiv:1708.03476},
  year={2017},
  note={\url{https://arxiv.org/pdf/1708.03476} (preprint)}
}

@article{AgelosFleisch,
	AUTHOR = {Georgakopoulos, Agelos},
     TITLE = {Infinite {H}amilton cycles in squares of locally finite
              graphs},
   JOURNAL = {Adv. Math.},
  FJOURNAL = {Advances in Mathematics},
    VOLUME = {220},
      YEAR = {2009},
    NUMBER = {3},
     PAGES = {670--705},
      ISSN = {0001-8708},
   MRCLASS = {05C45},
  MRNUMBER = {2483226},
MRREVIEWER = {Daniela K\"{u}hn},
       DOI = {10.1016/j.aim.2008.09.014},
       note = {\url{https://doi.org/10.1016/j.aim.2008.09.014}}
       }

@book{bogo,
	AUTHOR = {Bogopolski, Oleg},
     TITLE = {Introduction to group theory},
    SERIES = {EMS Textbooks in Mathematics},
 PUBLISHER = {European Mathematical Society (EMS), Z\"{u}rich},
      YEAR = {2008},
     PAGES = {x+177},
      ISBN = {978-3-03719-041-8},
   MRCLASS = {20-01 (20Dxx 20F28 20F65)},
  MRNUMBER = {2396717},
MRREVIEWER = {Laurent Bartholdi},
       DOI = {10.4171/041},
       note = {\url{https://doi.org/10.4171/041}}
	}

@article {wittedigraphs,
    AUTHOR = {Witte, David S.},
     TITLE = {On {H}amiltonian circuits in {C}ayley diagrams},
   JOURNAL = {Discrete Math.},
  FJOURNAL = {Discrete Mathematics},
    VOLUME = {38},
      YEAR = {1982},
    NUMBER = {1},
     PAGES = {99--108},
      ISSN = {0012-365X},
   MRCLASS = {05C25 (05C45 20F32)},
  MRNUMBER = {676525},
MRREVIEWER = {Joseph B. Klerlein},
       DOI = {10.1016/0012-365X(82)90174-1},
       note = {\url{https://doi.org/10.1016/0012-365X(82)90174-1}}
}

@article {wittesurvey,
AUTHOR = {Witte, David and Gallian, Joseph A.},
     TITLE = {A survey: {H}amiltonian cycles in {C}ayley graphs},
   JOURNAL = {Discrete Math.},
  FJOURNAL = {Discrete Mathematics},
    VOLUME = {51},
      YEAR = {1984},
    NUMBER = {3},
     PAGES = {293--304},
      ISSN = {0012-365X},
   MRCLASS = {05C45 (05C25)},
  MRNUMBER = {762322},
MRREVIEWER = {A. Gregory Starling},
       DOI = {10.1016/0012-365X(84)90010-4},
       note = {\url{https://doi.org/10.1016/0012-365X(84)90010-4}}
}

@article {Lanel,
    AUTHOR = {Lanel, Ganhewalage Jayantha and others},
     TITLE = {A survey on hamiltonicity in {C}ayley graphs and digraphs on different groups},
   JOURNAL = {Discrete Math. Algorithms Appl.},
  FJOURNAL = {Discrete Mathematics, Algorithms and Applications},
    VOLUME = {11},
      YEAR = {2019},
    NUMBER = {5},
     PAGES = {Article ID 1930002},
       ISSN = {1793-8309},
   MRCLASS = {05C45 (05C25)},
  MRNUMBER = {4027130},
       DOI = {10.1142/s1793830919300029},
       note = {\url{https://doi.org/10.1142/s1793830919300029}}
}

@article {nash1959abelian,
AUTHOR = {Nash-Williams, C. St. J. A.},
     TITLE = {Abelian groups, graphs and generalized knights},
   JOURNAL = {Proc. Cambridge Philos. Soc.},
  FJOURNAL = {Proceedings of the Cambridge Philosophical Society},
    VOLUME = {55},
      YEAR = {1959},
     PAGES = {232--238},
      ISSN = {0008-1981},
   MRCLASS = {20.00 (05.00)},
  MRNUMBER = {105438},
MRREVIEWER = {A. Kert\'{e}sz},
       DOI = {10.1017/s0305004100033946},
       note = {\url{https://doi.org/10.1017/s0305004100033946}}
}

@book{scott,
	Author = {Scott, William R},
	Date-Added = {2015-10-10 12:53:49 +0330},
	Date-Modified = {2015-10-10 12:54:44 +0330},
	Publisher = {Prentice-Hall, Englewood Cliffs, NJ},
	Title = {Group Theory},
	Year = {1964}
	}

@inproceedings{ScottWall,
    AUTHOR = {Scott, Peter and Wall, Terry},
     TITLE = {Topological methods in group theory},
 BOOKTITLE = {Homological group theory ({P}roc. {S}ympos., {D}urham, 1977)},
    SERIES = {London Math. Soc. Lecture Note Ser.},
    VOLUME = {36},
     PAGES = {137--203},
 PUBLISHER = {Cambridge Univ. Press, Cambridge-New York},
      YEAR = {1979},
   MRCLASS = {57M05 (20E06)},
  MRNUMBER = {564422},
MRREVIEWER = {W. Jaco},
note={\url{https://doi.org/10.1017/cbo9781107325449.007}}
}

@article{RDsBanffSurveyII,
    AUTHOR = {Diestel, Reinhard},
     TITLE = {Locally finite graphs with ends: a topological approach, {II}.
              {A}pplications},
   JOURNAL = {Discrete Math.},
  FJOURNAL = {Discrete Mathematics},
    VOLUME = {310},
      YEAR = {2010},
    NUMBER = {20},
     PAGES = {2750--2765},
      ISSN = {0012-365X},
   MRCLASS = {05C85 (05C15)},
  MRNUMBER = {2672223},
       DOI = {10.1016/j.disc.2010.05.027},
       note ={\url{https://doi.org/10.1016/j.disc.2010.05.027}}
}

@article{RDsBanffSurveyI,
    AUTHOR = {Diestel, Reinhard},
     TITLE = {Locally finite graphs with ends: a topological approach, {I}.
              {B}asic theory},
   JOURNAL = {Discrete Math.},
  FJOURNAL = {Discrete Mathematics},
    VOLUME = {311},
      YEAR = {2011},
    NUMBER = {15},
     PAGES = {1423--1447},
      ISSN = {0012-365X},
   MRCLASS = {05C63 (05C10)},
  MRNUMBER = {2800969},
MRREVIEWER = {Agelos Georgakopoulos},
       DOI = {10.1016/j.disc.2010.05.023},
       note = {\url{https://doi.org/10.1016/j.disc.2010.05.023}}
}

@article{chen1981strongly,
AUTHOR = {Chen, Chuan Chong and Quimpo, Norman F.},
     TITLE = {On strongly {H}amiltonian abelian group graphs},
 BOOKTITLE = {Combinatorial mathematics, {VIII} ({G}eelong, 1980)},
    SERIES = {Lecture Notes in Math.},
    VOLUME = {884},
     PAGES = {23--34},
 PUBLISHER = {Springer, Berlin-New York},
      YEAR = {1981},
   MRCLASS = {05C25 (05C45)},
  MRNUMBER = {641233},
MRREVIEWER = {T. Zamfirescu},
note={\url{https://doi.org/10.1007/bfb0091805}}
}

@article{alspach2010hamilton,
AUTHOR = {Alspach, Brian and Chen, Chuan Chong and Dean, Matthew},
     TITLE = {Hamilton paths in {C}ayley graphs on generalized dihedral
              groups},
   JOURNAL = {Ars Math. Contemp.},
  FJOURNAL = {Ars Mathematica Contemporanea},
    VOLUME = {3},
      YEAR = {2010},
    NUMBER = {1},
     PAGES = {29--47},
      ISSN = {1855-3966},
   MRCLASS = {05C25 (05C45)},
  MRNUMBER = {2592513},
MRREVIEWER = {Primo\v{z} \v{S}parl},
       DOI = {10.26493/1855-3974.101.a37},
       note = {\url{https://doi.org/10.26493/1855-3974.101.a37}}
}

@book{diestelgraph,
    AUTHOR = {Diestel, Reinhard},
     TITLE = {Graph theory},
    SERIES = {Graduate Texts in Mathematics},
    VOLUME = {173},
   EDITION = {Fifth},
 PUBLISHER = {Springer, Berlin},
      YEAR = {2018},
     PAGES = {xviii+428},
      ISBN = {978-3-662-57560-4; 978-3-662-53621-6},
   MRCLASS = {05-01 (01A75 05Cxx)},
  MRNUMBER = {3822066},
  DOI={10.1007/978-3-662-53622-3_12},
  note = {\url{https://doi.org/10.1007/978-3-662-53622-3_12}},
}

@article {MR3266284,
    AUTHOR = {Morris, Joy and Spiga, Pablo and Verret, Gabriel},
     TITLE = {Automorphisms of {C}ayley graphs on generalised dicyclic
              groups},
   JOURNAL = {European J. Combin.},
  FJOURNAL = {European Journal of Combinatorics},
    VOLUME = {43},
      YEAR = {2015},
     PAGES = {68--81},
      ISSN = {0195-6698},
   MRCLASS = {05C25 (20B25)},
  MRNUMBER = {3266284},
MRREVIEWER = {E. McMahon},
       DOI = {10.1016/j.ejc.2014.07.003},
       note = {\url{https://doi.org/10.1016/j.ejc.2014.07.003}},
}

@article {MR4828039,
    AUTHOR = {Miraftab, Babak and Morris, Dave Witte},
     TITLE = {On vertex-transitive graphs with a unique hamiltonian cycle},
   JOURNAL = {J. Graph Theory},
  FJOURNAL = {Journal of Graph Theory},
    VOLUME = {108},
      YEAR = {2025},
    NUMBER = {1},
     PAGES = {65--99},
      ISSN = {0364-9024,1097-0118},
   MRCLASS = {05C45 (05C25 05C63)},
  MRNUMBER = {4828039},
MRREVIEWER = {Pablo\ Spiga},
       DOI = {10.1002/jgt.23166},
       URL = {https://doi-org.proxy.library.carleton.ca/10.1002/jgt.23166},
}

@article {MR4908254,
    AUTHOR = {Darijani, Iren and Miraftab, Babak and Morris, Dave Witte},
     TITLE = {Arc-disjoint {H}amiltonian paths in {C}artesian products of
              directed cycles},
   JOURNAL = {Ars Math. Contemp.},
  FJOURNAL = {Ars Mathematica Contemporanea},
    VOLUME = {25},
      YEAR = {2025},
    NUMBER = {2},
     PAGES = {Paper No. 10, 32},
      ISSN = {1855-3966,1855-3974},
   MRCLASS = {05C25 (05C45 20K01 20K25)},
  MRNUMBER = {4908254},
MRREVIEWER = {Joseph\ B.\ Klerlein},
}
\end{document}